\newtheorem{definition}{Definition}
\newtheorem{assumption}{Assumption}
\newtheorem{theorem}{Theorem}
\newtheorem{remark}{Remark}
\newtheorem{proposition}{Proposition}
\newtheorem{lemma}{Lemma}
\title{\LARGE \bf
Output-Lifted Learning Model Predictive Control
}
\author{Siddharth H. Nair, Ugo Rosolia,  Francesco Borrelli% <-this % stops a space
% <-this % stops a space
\thanks{SHN, FB are with the Department of Mechanical Engineering, University of California Berkeley. Email IDs: \{siddharth\_nair, fborrelli\}@berkeley.edu}
\thanks{UR is with the Department of Mechanical and Civil Engineering, California Institute of Technology. Email ID: urosolia@caltech.edu}
}
\begin{document}

\maketitle
\thispagestyle{empty}
\pagestyle{empty}

%%%%%%%%%%%%%%%%%%%%%%%%%%%%%%%%%%%%%%%%%%%%%%%%%%%%%%%%%%%%%%%%%%%%%%%%%%%%%%%%
\begin{abstract}
We propose a computationally efficient Learning Model Predictive Control (LMPC) scheme for constrained optimal control of a class of nonlinear systems where  the  state and input can be reconstructed using lifted outputs.
For the considered class of systems, we show how to use historical trajectory data collected during iterative tasks to construct a convex value function approximation along with a convex safe set in a lifted space of virtual outputs. These constructions are iteratively updated with historical data and used to synthesize predictive control policies. We show that the proposed strategy guarantees recursive constraint satisfaction, asymptotic stability and non-decreasing closed-loop performance at each policy update. Finally, simulation results demonstrate the effectiveness of the proposed strategy on a piecewise affine (PWA) system, kinematic unicycle and bilinear DC motor.

\end{abstract}
\section{Introduction}

Infinite-horizon optimal control has a long and celebrated history, with the cornerstones laid in the 1950s by  \cite{pontryagin2018mathematical} and \cite{bellman1966dynamic}. The problem involves seeking a control signal that minimizes the cost incurred by a trajectory of a dynamical system starting from an initial condition over an infinite time horizon. While certain problem settings admit analytical solutions (like unconstrained LQR (\cite{kwakernaak1972linear})), the infinite-horizon optimal control problem for general nonlinear dynamical systems subject to constraints, is challenging to solve. This is because these problems require the numerical solution of an infinite-dimensional optimization problem, which is intractable even in the discrete-time setting (where the solution is an infinite sequence of control inputs instead of a control input signal).

Model Predictive Control (MPC) %(\cite{mpc,findeisen2002introduction}) 
is an attractive methodology for tractable synthesis of feedback control of constrained nonlinear discrete-time systems. The control action at every instant requires the solution of a finite-horizon optimal control problem with a suitable constraint and cost on the terminal state of the system to approximate the infinite-horizon problem. These terminal components are designed so that the closed-loop system is stable and satisfies constraints. This is achieved by constraining the terminal state to lie in a control invariant set with an associated Control Lyapunov function (CLF). %(\cite{mayne2000constrained}). 
The computation of these sets with an accompanying CLF for nonlinear systems is challenging, in general, and a  proper review goes outside the scope of this article.
%, offline methods for the same have been proposed in %\cite{fagiano2013generalized,korda2013convex}.

% The Learning Model Predictive Control (LMPC) strategy proposed in \cite{UgoTAC} iteratively constructs a control invariant terminal set and an accompanying terminal cost function in an online fashion using historical data. The terminal set is defined as a discrete set of past states and the terminal cost is only defined at these points and so the LMPC relies on the solution of a Mixed-Integer Nonlinear Program (MINLP) at each instant for guaranteed stability and constraint satisfaction.
% % At each iteration, a feasible solution to an infinite-horizon optimal control problem is obtained in an MPC framework. 
% % The terminal set is recursively defined as the set of states obtained from previous iterations and for each state in the set, the terminal cost is simply defined to be the cost incurred by the obtained trajectory starting from that state. The LMPC has been shown to stabilize constrained nonlinear systems while decreasing the cost incurred by trajectories with every successive iteration. 
% In \cite{rosolia2017learning}, the authors convexify the terminal set and the cost function and show that for linear systems, the stability and iterative cost improvement properties of the LMPC are preserved while ensuring that system operates within constraints. The convexification of the terminal cost function aids numerical computation of the feedback control by eliminating the combinatorial nature of the terminal constraint in the finite-horizon optimal control problem of MPC.

For iterative tasks where the system starts from the same position for every iteration of the optimal control problem, data from previous iterations may be used to update the MPC design using ideas from Iterative Learning Control (ILC)~(\cite{ILCsurvey,ILC_MPC_cost, ILC_MPC_model}).
% Combining ideas from Iterative Learning Control (ILC)~\cite{ILCsurvey} and MPC ~\cite{ILC_MPC_cost, ILC_MPC_model, ILC_MPC_cost, liu2013nonlinear}.
In these strategies the goal of the controller is to track a given reference trajectory, and the tracking error from the previous execution is used to update the controller. For control problems where a reference trajectory may be hard to compute, \cite{UgoTAC} proposed a
% When the a control task is performed iteratively, MPC strategies may be combined with Iterative Learning Control (ILC) methodologies to update the control policy using historical data. In ILC, at each iteration, the system starts from the same initial condition and the controller objective is to track a given reference, rejecting periodic disturbances~\cite{ILCsurvey}. The tracking error may be used to update cost function of the MPC problem~\cite{ILC_MPC_cost} or to compute a feedforwards term~\cite{ILC_MPC_FF} or to construct a linearized model which better approximate the nonlinear system dynamics~\cite{ILC_MPC_model}.
reference-free iterative policy synthesis strategy, called Learning Model Predictive Control (LMPC) which iteratively constructs a control invariant terminal set and an accompanying terminal cost function using historical data. These quantities are discrete, therefore the LMPC relies on the solution of a Mixed-Integer Nonlinear Program (MINLP) at each instant for guaranteed stability and constraint satisfaction. 
We build on the work of~\cite{UgoTAC} and propose a strategy to reduce the computational burden for a class of nonlinear systems by replacing these discrete sets and functions with convex ones while still maintaining safety and performance  guarantees.

In this work, we present a LMPC framework for a class of discrete-time nonlinear systems for which the state and input can be reconstructed using \textit{lifted} outputs. These \textit{lifted} outputs are constructed using \textit{flat} outputs (\cite{flat_def}) which have also been used in \cite{aranda1996linearization} to construct dynamic feedback linearizing inputs for discrete-time systems. Existing works on constrained control for such systems require a carefully designed reference trajectory which is then tracked using MPC with a linear model obtained either by a first order approximation (\cite{de2009flatness}) or by feedback linearization (\cite{wang2019flatness,greeff2018flatness, kandler2012differential}). In both cases, there are no formal guarantees of closed-loop system stability and constraint satisfaction.  The contribution of this article is twofold. First, we show how to construct convex terminal set and terminal cost using historical \textit{lifted} output data for the MPC optimization problem.  Second, we show that with some mild assumptions, a convex synthesis of the terminal cost is permissible on the space of \textit{lifted} outputs. As opposed to the discrete formulation of the terminal set and cost in \cite{UgoTAC} (thus requiring solutions to MINLPs), our formulation enables us to solve continuous Nonlinear Programs (NLPs). This can significantly decrease the computational overheads associated with computing the control action at each instant. Second, we show that the revised LMPC strategy in closed-loop ensures $i$) constraint satisfaction $ii$) convergence to equilibrium $iii$) non-decreasing closed-loop system performance across iterations.  

The paper is organized as follows. We begin by formally describing the problem we want to solve in Section~\ref{sec:PD} along with required definitions. Section~\ref{sec:LMPCF} shows how to construct the terminal set and terminal cost in the lifted output space and it introduces the control design. The closed-loop properties are analysed in Section~\ref{sec:LMPC_analysis}. Finally, Section~\ref{sec:ex} presents numerical results that illustrate our proposed approach on three examples.
%%%%%%%%%%%%%%%%%%%%%%%%%%%%%%%%%%%%%%%%%%%%%%%%%%%%%%%%%%%%%%%%%%%%%%%%%%%%%%%%
\section{Problem Formulation}\label{sec:PD}
Consider a nonlinear discrete-time system given by the dynamics
\begin{align}\label{eq:sysdyn}
    x_{t+1}=f(x_t,u_t),
\end{align}
% where the dynamics are possibly hybrid, of the form 
% \begin{align}\label{eq:hybriddyn}
%     x_{k+1}=f_i(x_k,u_k) \quad\quad&\textrm{if } x_k\in\mathbb{X}_i\\
%     &i\in\{1,2,\dots M\}\nonumber
% \end{align}
 where $x_t\in\mathbb{X}\subseteq \mathbb{R}^n$ and $u_t\in\mathbb{U}\subseteq \mathbb{R}^m$  are the system state and input respectively at time $t$. Let $x_F$ be an unforced equilibrium of \eqref{eq:sysdyn}, $x_F=f(x_F,0)$ with $f(\cdot,0)$ being continuous at $x_F$.
%  The state-space $\mathbb{X}$ is assumed to be partitioned into $M$ disjoint sets $\mathbb{X}_i$ with corresponding system dynamics function $f_i\ : \mathbb{X}_i\times\mathbb{U}\rightarrow \mathbb{X}$.\\
The \textit{lifted} output for the nonlinear system \eqref{eq:sysdyn} is defined below.
% Further suppose that the system \eqref{eq:sysdyn} is \textit{flat}, defined below.
\begin{definition}\label{def:diffFlat} Let $y_t=h(x_t)$ with $h : \mathbb{X}\rightarrow \mathbb{R}^m$ be the output of system \eqref{eq:sysdyn}. If $\exists R\in\mathbb{N}$ and a function $\mathcal{F}:\mathbb{R}^{m\times R+1}\rightarrow \mathbb{X}\times\mathbb{U}$,
such that any the state/input pair ($x_t$, $u_t$) can be uniquely reconstructed from a sequence of outputs $y_t,\dots, y_{t+R}$
\begin{align}\label{eq:flatdef}
    (x_{t},u_{t})&=\mathcal{F}([y_{t},y_{t+1},\dots,y_{t+R}]),
\end{align}
then the lifted output is the matrix \begin{align}\label{eq:vrtl_lftd_op}
\mathbf{Y}_t=[y_t,\dots,y_{t+R}]\in\mathbb{R}^{m\times R+1}.\end{align}
\end{definition}
\begin{remark} The output $y_t=h(x_t)$ corresponding to the lifted output in definition \ref{def:diffFlat} is also called a flat output in \cite{flat_def}.
For linear discrete-time systems, the existence of the lifted output is equivalent to the system being controllable and \textit{strongly} observable with the output $y_t=Cx_t$ (\cite{yong2015computational}).
\end{remark}
\begin{assumption}\label{ass:flat}
Given $y_t=h(x_t)$, system \eqref{eq:sysdyn} admits a lifted output $\mathbf{Y}_t$ as defined in (\ref{eq:vrtl_lftd_op}) for some finite $R$.
\end{assumption}

Consider the following infinite-horizon constrained optimal control problem for system \eqref{eq:sysdyn} with initial state $x_0=x_S$,\small
\begin{equation}\label{eq:OP_inf}
	\begin{aligned}
	J^*_{0\rightarrow\infty}(x_S)=\min_{u_0,u_1,\ldots}\quad & \displaystyle\sum\limits_{k\geq0} c(x_k,u_k) \\[1ex]
		\text{s.t.} \quad & x_{k+1}=f(x_k,u_k), ~\forall k\geq0\\
% 		\quad\quad\textrm{if }  x_k\in\mathbb{X}_i\\
    % &\quad\quad\quad\quad\quad\quad\quad i\in\{1,2,\dots M\}\\
    & x_k\in\mathcal{X}, u_k\in\mathcal{U},~\forall k\geq0\\
	& x_0 = x_S.
	\end{aligned}
\end{equation}\normalsize
The state constraints $\mathcal{X}$ and input constraints $\mathcal{U}$ are described by convex sets, and $c(\cdot,\cdot)$ is a continuous, convex and positive definite function that is $0$ only at the equilibrium, $c(x_F,0)=0$. Observe that due to continuity and positive definiteness of stage cost $c(\cdot)$, a trajectory corresponding to the optimizer of \eqref{eq:OP_inf} must necessarily have its state converge to $x_F$.

We aim to synthesize a state-feedback policy that approximates the solution to the infinite-horizon (and infinite-dimensional) problem \eqref{eq:OP_inf} such that it captures its most desirable properties: $(i)$ constraint satisfaction (feasibility) and $(ii)$ asymptotic convergence (bounded cost). To tackle the infinite-dimensional nature of the problem, we use MPC which solves finite-horizon versions of \eqref{eq:OP_inf} at each time step. To ensure that the MPC has the desired properties, we build on the Learning Model Predictive Control (LMPC) framework which solves problem \eqref{eq:OP_inf} iteratively using historical data. In the next section, we proceed to briefly describe these two techniques.

\begin{remark}
Notice that to streamline the presentation we considered iterative tasks. However, the proposed strategy can be used also when the initial condition changes at each iteration. As we will show in Section~V, to guarantee safety and closed-loop stability is it only required that the initial condition belongs to the region of attraction of the LMPC~policy. 
\end{remark}

\begin{remark}
We considered deterministic systems, but the proposed strategy may be extended to handle additive uncertainty using robust tube MPC strategies~\cite{mayne2011tube}. The key idea is to leverage the proposed approach to control the nominal dynamics and use a precomputed feedback gain to handle the uncertainty, as discussed in~\cite[Section~7]{mtlmpc}.
\end{remark}

% The sets $\mathcal{X}$, $\mathcal{U}$ are convex state and input constraints respectively, and $c(\cdot)$ is a continuous convex function that satisfies
% \begin{align}
%     c(\mathbf{y}_F)=0,\quad c(\mathbf{y})\succ 0, \quad\forall \mathbf{y}\in(\mathbb{R}^m)^{R+1}\backslash \{\mathbf{y}_F\},
% \end{align}
% where $\mathbf{y}_F=[y_F,\dots,y_F]$ and $y_F$ is the output observed at the unforced equilibrium $x_F$ of system \eqref{eq:sysdyn}. Note that the cost function $c(\cdot)$ penalises $(x_t,u_t)$ indirectly via $\mathbf{y}_t$ in view of Remark~\ref{rem:flat_unique}. Moreover, its continuity and positive definiteness necessitates $\lim_{t\rightarrow\infty}y_t=y_F$ for an optimal solution to \eqref{eq:OP_inf}.
\section{Preliminaries}
\subsection{Model Predictive Control}
Consider the following finite-horizon problem at each time $t$ from state $x_t$.\small
\begin{equation}\label{eq:OP_MPC}
	\begin{aligned}
	J_{t\rightarrow t+N}(x_t)= \min\limits_{\mathbf{u_t}} \quad & \displaystyle Q(x_{N|t})+\sum\limits_{k=0}^{N-1} c(x_{k|t},u_{k|t}) \\[1ex]
		\text{s.t.}\quad  & x_{k+1|t}=f(x_{k|t},u_{k|t}), \\
    & x_{k|t}\in\mathcal{X}, u_{k|t}\in\mathcal{U},\\
    & x_{N|t}\in\mathcal{X}_f,\\
	&  x_{0|t} = x_t\\
	&\forall k\in \{0,\dots, N-1\}
	\end{aligned}
\end{equation}\normalsize
where $\mathbf{u_t}=[u_{0|t},\ldots,u_{N-1|t}]$, the initial condition $x_0=x_S$, $\mathcal{X}_f\subseteq \mathcal{X}$ is a control invariant set \cite{mpc} for the system \eqref{eq:sysdyn} with associated Control Lyapunov Function (CLF) \cite{mayne2000constrained} $Q(\cdot)$ for the equilibrium $x_F$ chosen as the terminal cost function. If $\mathbf{u}^*_t=[u^*_{0|t},\dots, u^*_{N-1|t}]$ is the minimizer of \eqref{eq:OP_MPC}, then the MPC controller is given by 
\begin{align}\label{eq:MPC}
    u_t=\pi_{MPC}(x_t)=u^*_{0|t}
\end{align}
The control invariant set $\mathcal{X}_f$ and the CLF $Q(\cdot)$ are coupled with each other and are critical to ensuring that the MPC policy \eqref{eq:MPC} in closed-loop yields a feasible and stabilizing solution to the infinite-horizon problem \eqref{eq:OP_inf}. Observe that if the optimal cost $J^*_{t\rightarrow\infty}(x_t)=\sum_{k\geq t} c(x^*_k,u^*_k)$ was known $\forall t\geq 0$, setting $Q(x_{N|t})=J^*_{t+N\rightarrow\infty}(x_{N|t})$ solves \eqref{eq:OP_inf} without requiring a terminal constraint $\mathcal{X}_f$ in \eqref{eq:OP_MPC}. The other extreme case is setting $\mathcal{X}_f=\{x_F\}$ in \eqref{eq:OP_MPC} which would yield a stable and feasible solution without requiring a terminal cost $Q(\cdot)$. That being said, computing $J^*_{t\rightarrow\infty}(\cdot)$ exactly is possible only in trivial  cases and setting $\mathcal{X}_f=\{x_F\}$ may lead to an infeasible optimization problem if $x_F$ is not reachable from $x_S$ in $N$ steps. The goal is to design $\mathcal{X}_f$ and $Q(\cdot)$ so that \eqref{eq:OP_MPC} is feasible for all $t\geq 0$ while capturing the convergence properties of the infinite-horizon optimal control problem. 
\subsection{Learning Model Predictive Control}
LMPC iteratively approximates the solution of \eqref{eq:OP_inf} using the MPC problem \eqref{eq:OP_MPC}. At iteration $j$, it uses historical data in the form of state-input trajectories from completed iterations $i\in\{0,1,\dots\,j-1\}$ to construct the terminal set $\mathcal{X}_f$ and terminal cost $Q(\cdot)$. Let $x_t^j$, $u_t^j$ and $y_t^j$ be the state, input and output of the system respectively at time $t$, corresponding to the $j$th iteration. At iteration $j$, the terminal set $\mathcal{X}^j_f$ and terminal cost $Q^{j-1}(\cdot)$ are defined as follows:
\begin{align}\label{eq:SS_def}\small
    \mathcal{X}^j_f&=\mathcal{SS}^{j-1}=\bigcup\limits_{i=0}^{j-1}\bigcup\limits_{t\geq0}\{x^i_t\}\\
    Q^{j-1}(x)&= \begin{cases}
    \min\limits_{(i,t)\in\mathcal{I}_{j-1}(x)}\sum\limits_{k\geq t}c(x^i_k,u^i_k) & x\in\mathcal{SS}^{j-1}\\
    \infty & x\not\in\mathcal{SS}^{j-1}
    \end{cases}
\end{align}\normalsize
where $\mathcal{I}_{j-1}(x)=\{(i,t)| x=x^i_t\in \mathcal{SS}^{j-1}\}$. Simply stated, the terminal set is chosen as the collection of states from previous iterations (the safe set $\mathcal{SS}^{j-1}$) and the terminal cost ($Q^{j-1}(\cdot)$) at these states is the cost of the trajectory obtained starting from that state. The terminal set is discrete and the terminal cost function is only defined on these discrete states which makes \eqref{eq:OP_MPC} a mixed-integer nonlinear program (MINLP). The computational overhead for computing such MINLP solutions is prohibitive for online, repeated solutions of \eqref{eq:OP_MPC}.

We would like to investigate if the lifted output in definition \ref{def:diffFlat} helps alleviate the combinatorial nature of the optimization problem for more tractable synthesis of feedback control to solve problem \eqref{eq:OP_inf}.

\section{Output-Lifted LMPC}\label{sec:LMPCF}
In this section we present our LMPC design using flat outputs. First, we highlight some technical assumptions that we impose on the lifted output map $\mathcal{F}(\cdot)$ from equation \eqref{eq:flatdef}. We then show how to use the stored lifted outputs from previous iterations to construct a \textit{convex} safe set in the lifted output space. The constructed set is shown to be control invariant in this space and therefore it can be used to guarantee safety in a receding horizon scheme. Afterwards, we construct a \textit{convex} terminal cost in the lifted output space and prove that it is a CLF on the constructed set. Finally, we combine these components and present our control design.

\subsection{lifted Output Map Properties}\label{sec:flat}
% We assume that the system \eqref{eq:sysdyn} is $0-$ flat, i.e., $y_k=h(x_k)$ and its mode $i\in\{1,2,\dots, M\}$ can be determined from the flat output alone. Formally,
% $$x_k\in\mathbb{X}_i\iff y_k\in\mathbb{H}_i\equiv h(\mathbb{X}_i), \bigsqcup\limits_{i=1}^M\mathbb{H}_i=\mathbb{H}$$
% In the sequel, 
In addition to the existence of the lifted outputs in definition~\ref{def:diffFlat}, we require that the map $\mathcal{F}(\cdot)$ has the properties described in the following assumption. %In Section~\ref{sec:ex}, we consider three prototypical examples of systems meeting these assumptions.
\begin{assumption}\label{ass:flat_prop}
The lifted output $\mathbf{Y}_t$ corresponding to $y_t=h(x_t)$ and the map $\mathcal{F}(\cdot)$ satisfy the following properties:
\begin{enumerate}[\label=(A)]
% \item \label{ass:0flat}The flat output is 0-flat, $y_t=h(x_t)$.
    \item \label{ass:flat_class}The map $\mathcal{F}(\cdot)$ in \eqref{eq:flatdef} requires $R$ and $R+1$ outputs for identifying the state and the input, respectively. More formally, we have that\small
\begin{align}
    x_t&=\mathcal{F}_x([y_t,y_{t+1},\dots,y_{t+R-1}])\label{eq:flatclass_x}\\
    u_t&=\mathcal{F}_u([y_t,y_{t+1},\dots,y_{t+R}])\label{eq:flatclass_u}
\end{align}\normalsize
\item The map \label{ass:flat_cont} $\mathcal{F}=(\mathcal{F}_x,\mathcal{F}_u):\mathbb{R}^{m\times R+1}\rightarrow \mathbb{X}\times\mathbb{U}$ is continuous at $\mathbf{Y}_F=[y_F,\dots,y_F]\in\mathbb{R}^{m\times R+1}$ where $y_F=h(x_F)$.
\item\label{ass:flatmap_convex}Let $\mathcal{F}^i: \mathbb{R}^{m\times R+1}\rightarrow \mathbb{R}$ be the $i$th component of the map $\mathcal{F}:\mathbb{R}^{m\times R+1}\rightarrow \mathbb{X}\times\mathbb{U}\subset\mathbb{R}^{n+m}$ where $i=1,\dots, n+m$. Then $\forall i\in\{1,\dots, n+m\}$, the maps $\mathcal{F}^i$ are monotonic on any line restriction, i.e., $\mathcal{F}^i(t\mathbf{y}_1 + (1-t)\mathbf{y}_2)$ is monotonic $\forall \mathbf{y}_1,\mathbf{y}_2\in\mathbb{R}^{m\times R+1}$, $t\in[0,1]$.
% \item \label{ass:A4b}The map $\mathcal{F}$ is linear-fractional (projective), i.e.,
%     $$\mathcal{F}(\mathbf{y})=\frac{A\mathbf{y}+b}{c^\top \mathbf{y}+d},\quad c^\top \mathbf{y}+d>0.$$ 
\end{enumerate}
\end{assumption}

The intuition for consideration of outputs in Assumptions~\ref{ass:flat_prop}\eqref{ass:flat_class} arises from observing the kinematics of simple mechanical systems, where the kinematics are not affected explicitly by control. 
% As pointed out in \cite{murray1997nonlinear}, flatness occurs more naturally in mechanical systems in their geometry. 
Assumption \ref{ass:flat_prop}\eqref{ass:flat_cont} is technical, and it is required for showing that an optimizer of the infinite-horizon optimal control problem stabilizes the system to $x_F$. The following proposition clarifies the need of Assumption \ref{ass:flat_prop}\eqref{ass:flatmap_convex} for constraint satisfaction in the LMPC.

%     \item 
    
%     %
%     \item %For general convex constraints $\mathcal{X}$ and $\mathcal{U}$, we require 
    
% \end{enumerate}

% \end{assumption}
% \begin{assumption}\label{ass:flat_cont}

% \end{assumption}

% The conditions in assumption \ref{ass:flatmap_convex} ensure that given any set of feasible flat output sequences $\{\mathcal{Y}_1,\dots,\mathcal{Y}_p\}$ (feasibility implying that $\mathcal{F}(\mathcal{Y}_j)\in\mathcal{X}\times\mathcal{U}$ for each $j=1,\dots, p$), any point in the convex hull of this set, $\mathcal{Y}\in\textrm{conv}(\{\mathcal{Y}_1,\dots,\mathcal{Y}_p\})$ is also feasible with $\mathcal{F}(\mathcal{Y})\in\mathcal{X}\times\mathcal{U}$. 
\begin{proposition}\label{prop:box}
Suppose that $\mathcal{X}\subset\mathbb{R}^n$ and $\mathcal{U}\subset\mathbb{R}^m$ are given by box constraints, $\Vert D_xx-d_x\Vert_\infty\leq1$ and $\Vert D_uu-d_u\Vert_\infty\leq1$ respectively for some real, constant diagonal matrices $D_x, D_u$ and vectors $d_x, d_u$. Let $\{\mathbf{Y}^1,\dots,\mathbf{Y}^p\}$ be any set of lifted outputs such that $\mathcal{F}(\mathbf{Y}^j)\in\mathcal{X}\times\mathcal{U}$ for each $j=1,\dots, p$. Then if assumption \ref{ass:flat_prop}\eqref{ass:flatmap_convex} holds, we have $\mathcal{F}(\mathbf{y})\in \mathcal{X} \times \mathcal{U}$ for any $\mathbf{Y}\in\textrm{conv}(\{\mathbf{Y}^1,\dots,\mathbf{Y}^p\})$.
\end{proposition}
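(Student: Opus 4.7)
The plan is to exploit the decoupled structure of box constraints: they decompose into scalar interval constraints on each component, so I can verify constraint satisfaction one component of $\mathcal{F}$ at a time. Writing $\mathcal{X}\times\mathcal{U}=\{z\in\mathbb{R}^{n+m}\;|\;\;|D^{ii}z_i-d^i|\le 1,\;i=1,\dots,n+m\}$ for the combined diagonal matrix $D=\mathrm{diag}(D_x,D_u)$ and vector $d=(d_x,d_u)$, the claim $\mathcal{F}(\mathbf{Y})\in\mathcal{X}\times\mathcal{U}$ is equivalent to $\mathcal{F}^i(\mathbf{Y})\in[a^i,b^i]$ for every $i$, where $[a^i,b^i]$ is the interval determined by the $i$th row. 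So it suffices to fix $i$ and show that $\mathcal{F}^i$ maps $\mathrm{conv}(\{\mathbf{Y}^1,\dots,\mathbf{Y}^p\})$ into $[a^i,b^i]$.

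I would handle this by induction on $p$. For the base case $p=2$, any $\mathbf{Y}\in\mathrm{conv}(\{\mathbf{Y}^1,\mathbf{Y}^2\})$ has the form $\mathbf{Y}=t\mathbf{Y}^1+(1-t)\mathbf{Y}^2$ for some $t\in[0,1]$. Assumption \ref{ass:flat_prop}\eqref{ass:flatmap_convex} says that $g(t):=\mathcal{F}^i(t\mathbf{Y}^1+(1-t)\mathbf{Y}^2)$ is monotonic on $[0,1]$, so
\begin{equation*}
g(t)\in\bigl[\min\{g(0),g(1)\},\,\max\{g(0),g(1)\}\bigr]=\bigl[\min\{\mathcal{F}^i(\mathbf{Y}^2),\mathcal{F}^i(\mathbf{Y}^1)\},\,\max\{\mathcal{F}^i(\mathbf{Y}^2),\mathcal{F}^i(\mathbf{Y}^1)\}\bigr].
\end{equation*}
Since both endpoint values $\mathcal{F}^i(\mathbf{Y}^1),\mathcal{F}^i(\mathbf{Y}^2)$ lie in $[a^i,b^i]$ by hypothesis, so does $g(t)$. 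Applying this to each $i$ yields $\mathcal{F}(\mathbf{Y})\in\mathcal{X}\times\mathcal{U}$.

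For the inductive step, given $\mathbf{Y}=\sum_{j=1}^{p}\alpha_j\mathbf{Y}^j$ with $\alpha_j\ge 0$ and $\sum_j\alpha_j=1$, I would use the standard convex combination reduction: if $\alpha_p=1$ the claim is trivial, and otherwise write $\mathbf{Y}=\alpha_p\mathbf{Y}^p+(1-\alpha_p)\mathbf{Y}'$ with $\mathbf{Y}':=\sum_{j=1}^{p-1}\tfrac{\alpha_j}{1-\alpha_p}\mathbf{Y}^j\in\mathrm{conv}(\{\mathbf{Y}^1,\dots,\mathbf{Y}^{p-1}\})$. By the inductive hypothesis, $\mathcal{F}(\mathbf{Y}')\in\mathcal{X}\times\mathcal{U}$, and then the base case applied to $\{\mathbf{Y}^p,\mathbf{Y}'\}$ delivers $\mathcal{F}(\mathbf{Y})\in\mathcal{X}\times\mathcal{U}$. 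There is no real obstacle here; the only subtle point worth being explicit about is that the monotonicity hypothesis is per-segment but the direction of monotonicity may depend on both $i$ and the pair of endpoints, which is precisely why the argument has to be done componentwise and why axis-aligned (box) constraints are used rather than general convex sets.
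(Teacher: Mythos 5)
Your proof is correct and takes essentially the same route as the paper: a componentwise reduction using the box structure, induction on $p$ with the base case $p=2$ handled by monotonicity of $\mathcal{F}^i$ on the segment, and the inductive step via the standard split $\mathbf{Y}=\alpha_p\mathbf{Y}^p+(1-\alpha_p)\mathbf{Y}'$. The only cosmetic difference is that the paper propagates the bound $\mathcal{F}^i(\mathbf{Y})\in[\min_k\mathcal{F}^i(\mathbf{Y}^k),\max_k\mathcal{F}^i(\mathbf{Y}^k)]$ through the induction and invokes the box constraints only at the very end, whereas you carry membership in the constraint interval $[a^i,b^i]$ directly; the two formulations are interchangeable.
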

\begin{proof} See Appendix, \ref{proof:box}.\end{proof}
% The proof of the second statement follows from section 2.3.3 of \cite{boyd}.
Assumption \ref{ass:flat_prop}\eqref{ass:flatmap_convex} is equivalent to requiring that each $\mathcal{F}^i(\cdot)$ is quasiconvex and quasiconcave (\cite[Section 3.4]{boyd}). In Sections~\ref{eg:DC} and~\ref{eg:uni}, we see that either may be relaxed depending on the domain of dynamics $f(\cdot,\cdot)$, which could trivially lower or upper bound the state space or input space. In view of proposition \ref{prop:box}, we make the following assumption.
\begin{assumption}\label{ass:box}
The state constraints $\mathcal{X}\subset\mathbb{R}^n$ and input constraints $\mathcal{U}\subset\mathbb{R}^m$ are box constraints, 
\small$$\mathcal{X}=\{\Vert D_xx-d_x\Vert_\infty\leq1\}\quad \mathcal{U}=\{\Vert D_uu-d_u\Vert_\infty\leq1\}$$ \normalsize
for some real, constant diagonal matrices $D_x, D_u$ and vectors $d_x,d_u$.
\end{assumption}

\begin{remark} The monotonicity property in assumption
~\ref{ass:flat_prop}\eqref{ass:flatmap_convex} is prevalent in literature (\cite{angeli2003monotone, yang2019sufficient}) for the system dynamics $f(\cdot)$. We require $\mathcal{F}(\cdot)$ to be monotonic instead of $f(\cdot)$.
\end{remark}
%coogan2015efficient}
\begin{remark}
If the map $\mathcal{F}(\cdot)$ is linear-fractional (projective)
\small$$\mathcal{F}(\mathbf{y})=\frac{A\mathbf{y}+b}{c^\top \mathbf{y}+d},\quad c^\top \mathbf{y}+d>0,$$\normalsize
then Assumption \ref{ass:box} can be relaxed to requiring $\mathcal{X}$ and $\mathcal{U}$ to be any convex set. This follows because linear-fractional functions preserve convexity of sets \cite{boyd} by mapping the line between any two points in its domain space to a line between the images of the two points.
\end{remark}
\subsection{Convex Safe Set}\label{ConvOPSS}
Let $\mathbf{Y}^j_t=[y^j_t,\dots,y^j_{t+R}]$ be the lifted output at time $t$ and iteration $j$. Similarly, define the matrix $\mathbf{y}^j_t$ using $R$ outputs as

\begin{align}\label{eq:op_casc}
\mathbf{y}^j_t=[y^j_t,\dots,y^j_{t+R-1}]\in\mathbb{R}^{m\times R}  
\end{align}
Note that each $\mathbf{y}^j_t$ uniquely identifies a state $x^j_t$ via the map \eqref{eq:flatclass_x}, $\mathcal{F}_x(\mathbf{y}^j_t)=x^j_t$.
We define a successful iteration as one whose corresponding state trajectory converges to $x_F$ while simultaneously meeting state constraints $\mathcal{X}$ and input constraints $\mathcal{U}$. This implies that a successful iteration corresponds to a feasible trajectory of \eqref{eq:OP_inf}. 

For iteration $j$, define the \textit{Output Safe Set} as the set of $\mathbf{y}^j_t$s corresponding to trajectories from preceding successful iterations (denoted by $\mathcal{I}_j\subseteq\{0,1,\dots, j-1\}$), i.e.,\small
\begin{align}\label{eq:SSdef}
    \mathcal{SS}_{\mathbf{y}}^{j-1}=\bigcup\limits_{i\in\mathcal{I}_j}\bigcup\limits_{k=0}^{\infty}\big\{\mathbf{y}^i_k\big\}.
\end{align}\normalsize
Taking the convex hull of this set, we now define the \textit{Convex Output Safe Set}
\begin{align}\label{eq:CSdef}\small
    \mathcal{CS}_{\mathbf{y}}^{j-1}&=\textrm{conv}(\mathcal{SS}_{\mathbf{y}}^{j-1}).
\end{align}\normalsize
We now show that the Convex Safe Set  $\mathcal{CS}_{\mathbf{y}}^{j-1}$ is in fact, control invariant in the sense described in the following proposition and in Figure~\ref{fig:prop_exp}.
\begin{figure}[h]
    \centering
    \includegraphics[scale=0.42]{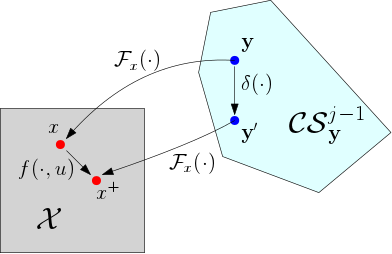}
    \caption{Illustration of the claim in Proposition \ref{prop:CS_CI}}
    \label{fig:prop_exp}
\end{figure}
\begin{proposition}\label{prop:CS_CI}
Under Assumptions~\ref{ass:flat_prop} and \ref{ass:box}, the set $\mathcal{CS}_{\mathbf{y}}^{j-1}$ defined in \eqref{eq:CSdef} is control invariant for the system dynamics \eqref{eq:sysdyn} subject to state constraints $\mathcal{X}$ and input constraints $\mathcal{U}$ in the following sense:
% \begin{align}
% &\forall\mathbf{y}\in\mathcal{CS}_{\mathbf{y}}^{j-1}, \mathcal{F}_x(\mathbf{y})=x\Rightarrow x\in\mathcal{X}\label{eq:imp_1_CI}\\
% &\text{\eqref{eq:imp_1_CI}}\Rightarrow \exists u\in\mathcal{U}, \mathbf{y}'\in\mathcal{CS}_{\mathbf{y}}^{j-1}\ \text{s.t } \mathcal{F}_x(\mathbf{y}')=f(x,u)\label{eq:imp_2_CI}\\
% &\text{\eqref{eq:imp_2_CI}}\Rightarrow  f(x,u)=x^+\in\mathcal{X}\label{eq:imp_3_CI}
% \end{align}
% or
\begin{align}
\forall\mathbf{y}\in\mathcal{CS}_{\mathbf{y}}^{j-1}, \exists u \in \mathcal{U},\exists\mathbf{y}'\in\mathcal{CS}_{\mathbf{y}}^{j-1} :\mathcal{F}_x(\mathbf{y}')=f(x,u)\in\mathcal{X}\label{eq:imp_2_CI}
\end{align}
where $x=\mathcal{F}_x(\mathbf{y})\in\mathcal{X}$.
\end{proposition}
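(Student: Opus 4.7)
The plan is to exhibit explicit candidates for the input $u$ and the successor output matrix $\mathbf{y}'$ by ``lifting'' the convex-combination representation of $\mathbf{y}$ to a single element of $\mathbb{R}^{m\times (R+1)}$, and then verify the required properties by combining Proposition~\ref{prop:box} with the structural shift identity inherent in the lifted output map.

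Since $\mathbf{y}\in\mathcal{CS}_{\mathbf{y}}^{j-1}=\mathrm{conv}(\mathcal{SS}_{\mathbf{y}}^{j-1})$, I would first write $\mathbf{y}=\sum_\ell \lambda_\ell\, \mathbf{y}^{i_\ell}_{k_\ell}$ with $\lambda_\ell\geq 0$, $\sum_\ell\lambda_\ell = 1$, and stored indices $(i_\ell,k_\ell)$ drawn from successful iterations, and then, using the \emph{same} multipliers, define $\mathbf{Y}:=\sum_\ell\lambda_\ell\mathbf{Y}^{i_\ell}_{k_\ell}\in\mathbb{R}^{m\times(R+1)}$ and $\mathbf{y}':=\sum_\ell\lambda_\ell\mathbf{y}^{i_\ell}_{k_\ell+1}$. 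A column-by-column inspection then shows that the first $R$ columns of $\mathbf{Y}$ are exactly $\mathbf{y}$ and the last $R$ columns are exactly $\mathbf{y}'$, since in each atom $\mathbf{y}^{i_\ell}_{k_\ell}$ is the first-$R$ block of $\mathbf{Y}^{i_\ell}_{k_\ell}$ and $\mathbf{y}^{i_\ell}_{k_\ell+1}$ is its last-$R$ block. Because every atom comes from a successful iteration, $\mathcal{F}(\mathbf{Y}^{i_\ell}_{k_\ell})=(x^{i_\ell}_{k_\ell},u^{i_\ell}_{k_\ell})\in\mathcal{X}\times\mathcal{U}$; Proposition~\ref{prop:box} then gives $\mathcal{F}(\mathbf{Y})\in\mathcal{X}\times\mathcal{U}$, and I would set $u:=\mathcal{F}_u(\mathbf{Y})\in\mathcal{U}$. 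By Assumption~\ref{ass:flat_prop}\eqref{ass:flat_class}, $\mathcal{F}_x(\mathbf{Y})$ coincides with $\mathcal{F}_x(\mathbf{y})=x\in\mathcal{X}$, and the same proposition applied to $\mathbf{y}'$ (which lies in $\mathcal{CS}_{\mathbf{y}}^{j-1}$ by construction) yields $\mathcal{F}_x(\mathbf{y}')\in\mathcal{X}$.

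The main obstacle is the dynamic-consistency identity $\mathcal{F}_x(\mathbf{y}')=f(x,u)$, which cannot be obtained by mere linearity since $f$ and $\mathcal{F}_x$ are nonlinear. At each atom the identity is automatic from Definition~\ref{def:diffFlat} and Assumption~\ref{ass:flat_prop}\eqref{ass:flat_class}, namely $\mathcal{F}_x(\mathbf{y}^{i_\ell}_{k_\ell+1})=x^{i_\ell}_{k_\ell+1}=f\bigl(\mathcal{F}_x(\mathbf{y}^{i_\ell}_{k_\ell}),\mathcal{F}_u(\mathbf{Y}^{i_\ell}_{k_\ell})\bigr)$. The step I would have to justify carefully is that, as a structural property of the lifted output map, $\mathcal{F}_x$ applied to the trailing $R$ columns of any argument $\mathbf{Y}$ in its domain equals $f$ evaluated at the pair $(\mathcal{F}_x(\text{first-}R\text{-block}),\mathcal{F}_u(\mathbf{Y}))$; this shift symmetry is precisely what makes $\mathbf{Y}$ a legitimate lifted output representation of the dynamics~\eqref{eq:sysdyn}, and it applies pointwise to the specific argument $\mathbf{Y}$ we built rather than via linearity in the $\lambda_\ell$. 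Combining this with the previous paragraph yields $\mathcal{F}_x(\mathbf{y}')=f(x,u)\in\mathcal{X}$, $u\in\mathcal{U}$, and $\mathbf{y}'\in\mathcal{CS}_{\mathbf{y}}^{j-1}$, which is exactly~\eqref{eq:imp_2_CI}.
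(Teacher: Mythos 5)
Your proposal follows essentially the same route as the paper's proof: the same convex-combination decomposition of $\mathbf{y}$, the same candidates $u=\mathcal{F}_u\bigl(\sum_\ell\lambda_\ell\mathbf{Y}^{i_\ell}_{k_\ell}\bigr)$ and $\mathbf{y}'=\sum_\ell\lambda_\ell\mathbf{y}^{i_\ell}_{k_\ell+1}$, and the same repeated invocations of Proposition~\ref{prop:box} for $x\in\mathcal{X}$, $u\in\mathcal{U}$ and $\mathcal{F}_x(\mathbf{y}')\in\mathcal{X}$. The dynamic-consistency step you flag is precisely the step the paper handles by positing inputs $u_2,\dots,u_{R-1}$ that realize $[y,\mathbf{y}']$ as an output sequence generated from $x$ under $u$ and then using the uniqueness in Definition~\ref{def:diffFlat}, so your ``structural shift property of $\mathcal{F}$'' is the same appeal to the lifted-output structure made in the paper.
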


\begin{proof}
By definition of $\mathcal{CS}_{\mathbf{y}}^{j-1}$ we have for $\mathbf{y}\in\mathcal{CS}_{\mathbf{y}}^{j-1}$,\small
\begin{align}\label{eq:y_convCS}
&\mathbf{y}=\sum\limits_{i\in\mathcal{I}_{j-1}}\sum\limits_{t\geq 0} \lambda^i_t\mathbf{y}^i_t,\ \mathbf{y}^i_t\in\mathcal{SS}_{\mathbf{y}}^{j-1}\subset\mathbb{R}^{m\times R}\\
&\sum_{i\in\mathcal{I}_{j-1}}\sum_{k\geq 0}\lambda^i_k=1,\ \lambda^i_k\geq 0\nonumber
\end{align}\normalsize
By the definition of $\mathcal{SS}_{\mathbf{y}}^{j-1}$, each of the $\mathbf{y}^i_t$s in \eqref{eq:y_convCS} corresponds to a feasible state, meaning $\mathcal{F}_x(\mathbf{y}^i_t)\in\mathcal{X}$. Invoking Proposition~\ref{prop:box} then gives us, \small
\begin{equation}\label{eq:imp_1_CI}
    \mathcal{F}_x(\mathbf{y})=x\in\mathcal{X}
\end{equation}\normalsize
Again, the definition of $\mathcal{SS}_{\mathbf{y}}^{j-1}$ entails $\mathbf{y}^i_t\in\mathcal{SS}_{\mathbf{y}}^{j-1}\Rightarrow\mathbf{y}^i_{t+1}\in\mathcal{SS}_{\mathbf{y}}^{j-1}.$ We use the lifted output $\mathbf{Y}^j_t$ and map $\mathcal{F}_u(\cdot)$ to reconstruct the input applied in the $i$th iteration at time $t$ as $u^i_t=\mathcal{F}_u([y^i_{t},y^i_{t+1},\dots,y^i_{t+R}])=\mathcal{F}_u([y^i_{t},\mathbf{y}^i_{t+1}])$ and note that $u^i_t\in\mathcal{U}$ for all $i\in\mathcal{I}_{j-1}$ by the definition of the set $\mathcal{SS}_{\mathbf{y}}^{j-1}$. Consider the following control input
\small
\begin{align}\label{eq:safe_u}
    u&=\mathcal{F}_u(\sum\limits_{i\in\mathcal{I}_{j-1}}\sum\limits_{t\geq 0} \lambda^i_t[y^i_t,\mathbf{y}^i_{t+1}])\nonumber\\
    &=\mathcal{F}_u([\sum\limits_{i\in\mathcal{I}_{j-1}}\sum\limits_{t\geq 0} \lambda^i_t y^i_t,\sum\limits_{i\in\mathcal{I}_{j-1}}\sum\limits_{t\geq 0} \lambda^i_t \mathbf{y}^i_{t+1}])\nonumber\\
    &=\mathcal{F}_u([y,\mathbf{y}'])
\end{align}\normalsize
where $ y=\sum_{i\in\mathcal{I}_{j-1}}\sum_{t\geq 0} \lambda^i_t y^i_t\in\mathbb{R}^m$ and $\mathbf{y}'=\sum_{i\in\mathcal{I}_{j-1}}\sum_{t\geq 0} \lambda^i_t \mathbf{y}^i_{t+1}\in\mathbb{R}^{m\times R}$. Invoking proposition \ref{prop:box} again proves $u\in\mathcal{U}$. Also see that 
\begin{equation}\label{eq:time_shift_step}\small
    \mathbf{y}'=\sum_{i\in\mathcal{I}_{j-1}}\sum_{t\geq 0} \lambda^i_t \mathbf{y}^i_{t+1}\Rightarrow \mathbf{y}'\in\mathcal{CS}_{\mathbf{y}}^{j-1}.
\end{equation}\normalsize 
Let $u_2,\dots,u_{R-1}\in\mathbb{R}^m$ be the remaining inputs that generate $[y, \mathbf{y}']\in\mathbb{R}^{m\times R+1}$, i.e.,\small
\begin{align}\label{eq:flat_seq_gen}
    [y,\mathbf{y}']=&[h(x),h(f(x,u)),h(f^{(2)}(x,u,u_2)),\dots,\nonumber\\
    &h(f^{(R-1)}(x,u,\dots,u_{R-1}))]\in\mathbb{R}^{m\times R+1}
\end{align}\normalsize
where \small$f^{(k)}(x,u,\dots,u_k)=\underbrace{f(\dots(f}_{k\textrm{ times}}(x,u),\dots u_k).$\normalsize
Using the map \eqref{eq:flatclass_x} to construct state, we can write\small
\begin{align*}
    \mathcal{F}_x(\mathbf{y}')=&\mathcal{F}_x([h(f(x,u)),h(f^{(2)}(x,u,u_2)),\dots,\\&
    h(f^{(R-1)}(x,u,\dots,u_{R-1}))])\\
    =&f(x,u)
\end{align*}\normalsize
where the last equality is true because of the unique correspondence from \small$[y_t,\dots, y_{t+R-1}]=[h(x_t),\dots, h(f^{(R-1)}(x_t,u_t,\dots,u_{t+R-1}))]$\normalsize  to $x_t$ (Definition~\ref{def:diffFlat}). Finally, invoking proposition \ref{prop:box} using sequences $\mathbf{y}^i_{t+1}, \forall i\in\mathcal{I}_{j-1}$ gives us\small
\begin{equation}\label{eq:imp_3_CI}
f(x,u)\in\mathcal{X}.
\end{equation}\normalsize
\end{proof}

\begin{remark} 
Since $\mathcal{CS}_{\mathbf{y}}^{j-1}$ is not constructed on the state-space $\mathbb{X}$ directly, it not control invariant in the usual sense (\cite[Definition 10.9]{mpc}). But consider the forward-time shift operator $\delta(\cdot,\cdot)$ which defines dynamics on $\mathcal{CS}_{\mathbf{y}}^{j-1}$ as \small
\begin{align} \label{eq:time_shift}
   \mathbf{y}_{t+1}&=[y_{t+1},y_{t+2},\dots,y_{t+R}]\nonumber\\&=\delta( [y_{t}, y_{t+1},\dots,y_{t+R-1}], y_{t+R})\nonumber \\
   &=\delta(\mathbf{y}_t,y_{t+R})
\end{align}\normalsize
Notice from \eqref{eq:time_shift_step} in the proof of proposition \ref{prop:CS_CI} that the set $\mathcal{CS}_{\mathbf{y}}^{j-1}$ is control invariant on the space of output sequences with respect to the dynamics \eqref{eq:time_shift}, \small
\begin{align}
   \mathbf{y}_t\in\mathcal{CS}_{\mathbf{y}}^{j-1}\Rightarrow& \delta(\mathbf{y}_t,y_{t+R})=\mathbf{y}_{t+1}\in\mathcal{CS}_{\mathbf{y}}^{j-1}.\label{eq:pos_inv_delta}
\end{align}\normalsize
\end{remark}

The result of proposition \ref{prop:CS_CI} is  powerful; this allows us to consider the continuous set \eqref{eq:CSdef} instead of the discrete set \eqref{eq:SSdef} while still retaining the property of control invariance in the space of output sequences $\mathbf{y_t}$ with each pair $(\mathbf{y}_t,\mathbf{y}_{t+1})$ (equivalently, $\mathbf{Y}_t$) corresponding to state-input pairs within constraints. We use this continuous set for our MPC problem in Section~\ref{LMPC_Pol} to get a NLP instead of a MINLP.
\subsection{Convex Terminal Cost}\label{ConvOPTC}
Now we proceed to construct a terminal cost function which approximates the optimal cost-to-go from a state using lifted outputs from previous iterations. For some iteration $i$ and some time $t$, we define the cost-to-go for points in $\mathcal{SS}_{\mathbf{y}}^{j-1}$ as\small
\begin{equation}\label{eq:ctg}
    \mathcal{C}^i_t=\sum\limits_{k\geq t} C(\mathbf{y}^i_k)
\end{equation}\normalsize
 where the function $C(\cdot)$ is convex, continuous and satisfies\small
 \begin{align}\label{stage_cost}
         C(\mathbf{y}_F)=0,\ C(\mathbf{y})\succ 0 \ \forall \mathbf{y}\in\mathbb{R}^{m\times R}\backslash \{\mathbf{y}_F\}.
 \end{align}\normalsize
% positive except on the subspace $\mathcal{Y}_F=\{\mathbf{y}\in\mathbb{R}^{m\times R}: \mathbf{y}[1\ 0\ \dots\ 0]^\top=y_F\}$ where it is 0, i.e.,
% \begin{align}\label{stage_cost}
%     C(\mathbf{y})=0\ \forall\mathbf{y}\in\mathcal{Y}_F,\ C(\mathbf{y})> 0 \ \forall \mathbf{y}\in\mathbb{R}^{m\times R}\backslash \mathcal{Y}_F.
% \end{align}
Observe that since each $\mathbf{y}\in\mathcal{SS}^{j-1}_{\mathbf{y}}$ corresponds to a unique $x$ via \eqref{eq:flatclass_x}, $C(\cdot)$ is an implicit function of state. We address the case of input costs in Section~\ref{input_costs}.
% Suppose for now that stage cost only penalises state and so is of the form $c(x)$. We address the case of input costs in Section~\ref{input_costs}.
For iteration $j$, we use \eqref{eq:ctg} to construct the terminal cost on the convex safe set $\mathcal{CS}_{\mathbf{y}}^{j-1}$ using Barycentric interpolation (\cite{jones2010polytopic,rosolia2017learning}) with tuples $(\mathbf{y}^i_t, \mathcal{C}^i_t), \forall \mathbf{y}^i_t\in\mathcal{SS}_{\mathbf{y}}^{j-1}$.\small
\begin{equation}\label{eq:conv_cf}
\begin{aligned}
    Q^{j-1}(\mathbf{y})=\min\limits_{\substack{\lambda^i_k\in[0,1] \\ \forall i\in\mathcal{I}_{j-1}}} \quad & \sum\limits_{i\in\mathcal{I}_{j-1}}\sum\limits_{k\geq 0}\lambda^i_k\mathcal{C}^i_k\\[1ex]
		\text{s.t.} ~~\quad & \sum\limits_{i\in\mathcal{I}_{j-1}}\sum\limits_{k\geq 0}\lambda^i_k\mathbf{y}^i_k=\mathbf{y},\\
		& \sum\limits_{i\in\mathcal{I}_{j-1}}\sum\limits_{k\geq0}\lambda^i_k=1
\end{aligned}
\end{equation}\normalsize
For  any $\mathbf{y}\not\in\mathcal{CS}_{\mathbf{y}}^{j-1}$, we set $Q^{j-1}(\mathbf{y})=+\infty$. The following proposition identifies CLF-like characteristics of the function \eqref{eq:conv_cf} on the set $\mathcal{CS}_{\mathbf{y}}^{j-1}$ which we will use to show stability of the proposed controller in Section~\ref{sec:LMPC_analysis}.
\begin{proposition}\label{prop:Q_CLF}
The cost function $Q^{j-1}(\cdot)$ satisfies the following properties:
\begin{enumerate}
    \item $Q^{j-1}(\mathbf{y}_F)=0,\ Q^{j-1}(\mathbf{y})\succ 0\ \forall \mathbf{y}\in\mathcal{CS}_{\mathbf{y}}^{j-1}\backslash\{\mathbf{y}_F\}$ where $\mathbf{y}_F=[y_F,\dots,y_F]\in\mathbb{R}^{m\times R}$
    \item $Q^{j-1}(\mathbf{y}_{t+1})-Q^{j-1}(\mathbf{y}_t)\leq -C(\mathbf{y}_t),\ \forall \mathbf{y}_t\in\mathcal{CS}_{\mathbf{y}}^{j-1}$ where $\mathbf{y}_{t+1}=\delta(\mathbf{y}_t,y_{t+R})$ as in \eqref{eq:time_shift}.
\end{enumerate}
\end{proposition}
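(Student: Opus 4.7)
The plan is to prove the two properties separately, exploiting the structure of the Barycentric interpolation LP \eqref{eq:conv_cf}.

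For property (1), I would first show $Q^{j-1}(\mathbf{y}_F)=0$. Since every iteration $i\in\mathcal{I}_{j-1}$ is successful, its state trajectory drives $x^i_t\to x_F$, so the recorded lifted-output stream contains an index $k$ with $\mathbf{y}^i_l=\mathbf{y}_F$ for all $l\geq k$ and hence $\mathcal{C}^i_k=\sum_{l\geq k}C(\mathbf{y}^i_l)=0$ by \eqref{stage_cost}. Putting $\lambda^i_k=1$ on such an index is feasible in \eqref{eq:conv_cf} at $\mathbf{y}=\mathbf{y}_F$ and gives objective value $0$; combined with $\mathcal{C}^i_k\geq 0$, the optimum is exactly $0$. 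For strict positivity on $\mathcal{CS}_{\mathbf{y}}^{j-1}\setminus\{\mathbf{y}_F\}$ I would argue by contradiction: suppose $Q^{j-1}(\mathbf{y})=0$ with optimizer $\{\lambda^{i*}_k\}$. The zero objective together with $\mathcal{C}^i_k\geq 0$ forces $\lambda^{i*}_k\mathcal{C}^i_k=0$ for every term, so each active $\lambda^{i*}_k>0$ has $\mathcal{C}^i_k=0$, which by \eqref{stage_cost} requires $\mathbf{y}^i_l=\mathbf{y}_F$ for all $l\geq k$ and in particular $\mathbf{y}^i_k=\mathbf{y}_F$. Substitution into the equality constraint of \eqref{eq:conv_cf} yields $\mathbf{y}=\mathbf{y}_F$, a contradiction.

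For property (2), the plan is to construct an explicit feasible decomposition of $\mathbf{y}_{t+1}$ from the optimal decomposition of $\mathbf{y}_t$ and then combine telescoping with Jensen's inequality. Fix $\mathbf{y}_t\in\mathcal{CS}_{\mathbf{y}}^{j-1}$ with optimizer $\{\lambda^{i*}_k\}$, so $Q^{j-1}(\mathbf{y}_t)=\sum_{i,k}\lambda^{i*}_k\mathcal{C}^i_k$ and $\sum_{i,k}\lambda^{i*}_k\mathbf{y}^i_k=\mathbf{y}_t$. Define
\[
\mathbf{y}_{t+1}=\sum_{i\in\mathcal{I}_{j-1}}\sum_{k\geq 0}\lambda^{i*}_k\mathbf{y}^i_{k+1},
\]
which lies in $\mathcal{CS}_{\mathbf{y}}^{j-1}$ because $\mathcal{SS}_{\mathbf{y}}^{j-1}$ is closed under the time shift $k\mapsto k+1$. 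Since $\{\lambda^{i*}_k\}$ is a (possibly suboptimal) feasible choice for \eqref{eq:conv_cf} at $\mathbf{y}_{t+1}$ with atoms $\mathbf{y}^i_{k+1}$, we get $Q^{j-1}(\mathbf{y}_{t+1})\leq\sum_{i,k}\lambda^{i*}_k\mathcal{C}^i_{k+1}$. Using the telescoping identity $\mathcal{C}^i_{k+1}=\mathcal{C}^i_k-C(\mathbf{y}^i_k)$ from \eqref{eq:ctg} and then convexity of $C$ (Jensen's inequality with weights $\lambda^{i*}_k$),
\[
Q^{j-1}(\mathbf{y}_{t+1})\leq Q^{j-1}(\mathbf{y}_t)-\sum_{i,k}\lambda^{i*}_k C(\mathbf{y}^i_k)\leq Q^{j-1}(\mathbf{y}_t)-C(\mathbf{y}_t).
\]
A short bookkeeping check confirms the constructed $\mathbf{y}_{t+1}$ has the required form $\delta(\mathbf{y}_t,y_{t+R})$: by \eqref{eq:op_casc} the last $R-1$ columns of each $\mathbf{y}^i_k$ coincide with the first $R-1$ columns of $\mathbf{y}^i_{k+1}$, and averaging with $\{\lambda^{i*}_k\}$ preserves this column-wise relation, with new last column $y_{t+R}=\sum_{i,k}\lambda^{i*}_k y^i_{k+R}$.

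The main obstacle I anticipate is the tail handling in property (1): if successful iterations converge to $x_F$ only asymptotically, one cannot point to a single sample with $\mathbf{y}^i_k=\mathbf{y}_F$, and the cleanest fix is to approximate $Q^{j-1}(\mathbf{y}_F)$ by decompositions concentrated on later and later indices and invoke continuity to pass to the limit. Everything else---telescoping, Jensen's inequality, and the column-wise shift---is routine once the shift structure of the atoms $\{\mathbf{y}^i_k\}$ is spelled out.
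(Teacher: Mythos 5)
Your proposal is correct and follows essentially the same route as the paper's proof: for part (2) the identical shifted-weights feasibility argument, telescoping of the cost-to-go, and convexity (Jensen) step, and for part (1) positivity via positive definiteness of the cost-to-go atoms. The only differences are cosmetic---your contradiction argument for strict positivity tacitly assumes a minimizer of \eqref{eq:conv_cf} is attained (the paper inserts a brief compactness/Weierstrass remark for exactly this), and the tail issue you flag about asymptotic-only convergence is handled in the paper simply by taking $\mathbf{y}_F\in\mathcal{SS}_{\mathbf{y}}^{j-1}$ by definition, which is the same implicit assumption your zero-cost-atom argument uses.
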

\begin{proof}
1) First note that $\mathbf{y}\in\mathcal{CS}_{\mathbf{y}}^{j-1}$ implies that the optimization problem implicit in the definition of $Q^{j-1}(\cdot)$ is feasible. Also see that since the feasible set is compact (closed subset of countable product of compact sets) and the objective is continuous (linear, in fact), a minimizer exists by Weierstrass' theorem for every $\mathbf{y}\in\mathcal{CS}_{\mathbf{y}}^{j-1}$. Thus for any $\mathbf{y}\in\mathcal{CS}_{\mathbf{y}}^{j-1}$, we can write 
$Q^{j-1}(\mathbf{y})= \sum\limits_{i\in\mathcal{I}_{j-1}}\sum\limits_{k\geq 0}\lambda^{\star i}_k\mathcal{C}^i_k$ where the $\lambda^{\star i}_k$s satisfy the constraints in \eqref{eq:conv_cf}. The definition of cost-to-go $\mathcal{C}^i_k$ in \eqref{eq:ctg} and positive definiteness of $C(\cdot)$ imply that $Q^{j-1}(\mathbf{y}) \succ 0 \ \forall \mathbf{y}\in\mathcal{CS}_{\mathbf{y}}^{j-1}\backslash\{\mathbf{y}_F\}$. We finish the proof for the first part by observing that $\mathbf{y}_F\in\mathcal{SS}_{\mathbf{y}}^{j-1}\subset\mathcal{CS}_{\mathbf{y}}^{j-1}$ by definition \eqref{eq:SSdef} and so $Q^{j-1}(\mathbf{y}_F)=0$.\\\\
2) For any $\mathbf{y}_t\in\mathcal{CS}_{\mathbf{y}}^{j-1}$, let $Q^{j-1}(\mathbf{y}_t)= \sum\limits_{i\in\mathcal{I}_{j-1}}\sum\limits_{k\geq 0}\lambda^{\star i}_k\mathcal{C}^i_k$ with $\lambda^{\star i}_k$ satisfying the constraints in \eqref{eq:conv_cf}. Observing the linearity of the forward-time shift operator $\delta(\cdot,\cdot)$, we have
\small
\begin{align*}
    \mathbf{y}_{t+1}&=\delta(\mathbf{y}_t,y_{t+R})\\
    &=\delta(\sum\limits_{i\in\mathcal{I}_{j-1}}\sum\limits_{k\geq 0}\lambda^{\star i}_k\mathbf{y}^i_k, \sum\limits_{i\in\mathcal{I}_{j-1}}\sum\limits_{k\geq 0}\lambda^{\star i}_k y^i_{k+R})\\
    &=\sum\limits_{i\in\mathcal{I}_{j-1}}\sum\limits_{k\geq 0}\lambda^{\star i}_k\delta(\mathbf{y}^i_k,y^i_{k+R})\\
    &=\sum\limits_{i\in\mathcal{I}_{j-1}}\sum\limits_{k\geq 0}\lambda^{\star i}_k\mathbf{y}^i_{k+1}.
\end{align*}\normalsize
Thus the same $\lambda^{\star i}_k$s are also feasible for \eqref{eq:conv_cf} at $\mathbf{y}_{t+1}$ and we have
\small
\begin{align*}
    Q^{j-1}(\mathbf{y}_{t+1})-Q^{j-1}(\mathbf{y}_t)&\leq \sum\limits_{i\in\mathcal{I}_{j-1}}\sum\limits_{k\geq 0}\lambda^{\star i}_k(\mathcal{C}^i_{k+1}-\mathcal{C}^i_{k})\\
    &= \sum\limits_{i\in\mathcal{I}_{j-1}}\sum\limits_{k\geq 0}\lambda^{\star i}_k(-C(\mathbf{y}^i_k))\\
    &\leq -C(\sum\limits_{i\in\mathcal{I}_{j-1}}\sum\limits_{k\geq 0}\lambda^{\star i}_k \mathbf{y}^i_k)\\
    &=-C(\mathbf{y}_t)
\end{align*}\normalsize
The second to last inequality comes from the convexity of $C(\cdot)$.
This completes the proof of the second part of the proposition.
\end{proof}
The above proposition shows that $Q^{j-1}(\cdot)$ is in fact a CLF for the dynamics $\mathbf{y}_{t+1}=\delta(\mathbf{y}_t, y_{t+R})$ with input $y_{t+R}$ on the convex output safe set $\mathcal{CS}_{\mathbf{y}}^{j-1}$. This is a critical property that we will use for our convergence analysis in section ~\ref{ssec:convergence}.
\subsection{LMPC Feedback Policy}\label{LMPC_Pol}
In this section, we show how to use constructions \eqref{eq:CSdef} and \eqref{eq:conv_cf} to design our LMPC policy. Before doing so, as in \cite{UgoTAC} we make the following assumption to initialise our recursive construction \eqref{eq:SS_def} of $\mathcal{SS}_{\mathbf{y}}^{j-1}$.

\begin{assumption}\label{ass:SSinit}
We are provided with an flat trajectory $\{y^0_t\}_{t\geq0}$ corresponding to a trajectory of system \eqref{eq:sysdyn} that is feasible for \eqref{eq:OP_inf} and converges to $x_F$.
\end{assumption}
Using Assumption~\ref{ass:SSinit}, the Output Safe Set \eqref{eq:SSdef} is initialised for $j=1$ as $\mathcal{SS}_{\mathbf{y}}^{0}=\bigcup\limits_{t=0}^{\infty}\big\{\mathbf{y}^0_t\big\}$ where $\mathbf{y}^0_t=[y^0_t,\dots, y^0_{t+R-1}]$.

 At iteration $j\geq 1$, we define the terminal cost on the space $\mathcal{CS}_{\mathbf{y}}^{j-1}$ as $Q^{j-1}(\cdot)$ and constrain the terminal state as $x_{N|t}=\mathcal{F}_x(\mathbf{y})$ for $\mathbf{y}\in\mathcal{CS}_{\mathbf{y}}^{j-1}$. The stage cost is set as $C(\cdot)$ which implicitly penalises only state. We address incorporating input costs in Section~\ref{input_costs}. Like the forward-shift operator \eqref{eq:time_shift}, we define the backward-time shift operator as \small
 \begin{align}\label{eq:b_time_shift}
    \mathbf{y}_{t}&=[y_{t},\dots,y_{t+R-1}]\nonumber\\&=\delta^{-}( [y_{t+1}, y_{t+1},\dots,y_{t+R}], y_{t})\nonumber\\
      &=\delta^{-}(\mathbf{y}_{t+1},y_{t})
 \end{align}\normalsize
 Employing these definitions, the LMPC optimization problem is given by
 \small
\begin{equation}\label{eq:OP_LMPC}
	\begin{aligned}
	J^j_{t\rightarrow t+N}(x^j_t)= \min\limits_{\boldsymbol{u}_t^j} \quad & Q^{j-1}(\mathbf{y}_{N|t})+\sum\limits_{k=0}^{N-1} C(\mathbf{y}_{k|t}) \\[1ex]
		\text{s.t.}  \quad & x_{k+1|t}=f(x_{k|t},u_{k|t}), \\
% 		\quad\quad\textrm{if }  x_k\in\mathbb{X}_i\\
    %  &\quad\quad\quad\quad\quad\quad\quad i\in\{1,2,\dots M\}\\
    & \mathbf{y}_{k|t} =\delta^{-}(\mathbf{y}_{k+1|t},h(x_{k|t}))\\
    &x_{k|t}\in\mathcal{X}, u_{k|t}\in\mathcal{U},\\
    &x_{N|t}=\mathcal{F}_x(\mathbf{y}_{N|t}),~ \mathbf{y}_{N|t}\in\mathcal{CS}_{\mathbf{y}}^{j-1},\\
		&  x_{0|t} = x^j_t, \\
		&\forall k\in \{0,\dots, N-1\}
	\end{aligned}
\end{equation}\normalsize
where the vector $\boldsymbol{u}_t^j = [u_{0|t},\ldots,u_{N-1|t}]$ are the decision variables whose optimal solution defines the LMPC control as
\begin{align}\label{eq:LMPC}
    u^j_t=\pi_{LMPC}(x^j_t)=u^*_0.
\end{align} 
Notice that the above control policy is well-defined for all state $x_0$ for which problem~\eqref{eq:OP_LMPC} is feasible. Thus, we define the region of attraction
\begin{equation}
    \mathcal{R}^j = \{ x\in \mathcal{X} | J^j_{t\rightarrow t+N}(x) < \infty \},
\end{equation}
which collects the states from which problem~\eqref{eq:OP_LMPC} is feasible. Next, we will show that for all states $x_0^j \in \mathcal{R}^{j}$ the closed-loop system is stable and it satisfies the state and input constraints.
% \begin{remark}
% The system state at time $t$, $x^j_t=x_0$ is obtained as follows. Using the flat map \eqref{eq:flatclass_x} and observed outputs $y_{t-R+1},\dots,y_{t}$, the state $x_{t-R+1}$ is obtained. The applied inputs $u_{t-R+1},\dots, u_{t-1}$ are then used in \eqref{eq:sysdyn} to get the current state $x^j_t$. The LMPC controller \eqref{eq:LMPC} is thus implicitly a function of flat output sequence,
% $$u^j_t=\pi_{LMPC}(x^j_t)=\tilde{\pi}_{LMPC}(\mathbf{y}^j_{t-R+1})$$
% \end{remark}
\section{Properties of Proposed Strategy}\label{sec:LMPC_analysis}
\subsection{Recursive Feasiblity}
The next theorem and proof establishes the recursive feasibility of optimization problem \eqref{eq:OP_LMPC} for system \eqref{eq:sysdyn} in closed-loop with the LMPC policy \eqref{eq:LMPC}. We show this by leveraging the recursive definition of $\mathcal{CS}_{\mathbf{y}}^{j-1}$ and the result of Proposition~\ref{prop:CS_CI}. 
\begin{theorem}\label{thm:rf}
Given Assumptions \ref{ass:flat_prop} and \ref{ass:SSinit}, the optimization problem \eqref{eq:OP_LMPC} is recursively feasible for the system \eqref{eq:sysdyn} in closed-loop with the policy \eqref{eq:LMPC} for all iterations $j\geq 1$ and all time $t\geq 0$ with $x^j_0\in\mathcal{R}^{j}$.
\end{theorem}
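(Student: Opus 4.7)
The plan is a standard shift-and-append argument adapted to the lifted-output setting, with Proposition~\ref{prop:CS_CI} providing the admissible tail extension required by the terminal ingredient. Fix iteration $j$ and proceed by induction on $t$. The base case $t=0$ is immediate from the hypothesis $x_0^j \in \mathcal{R}^j$. For the inductive step, let $(\{x_{k|t}^\star\}_{k=0}^N, \{u_{k|t}^\star\}_{k=0}^{N-1}, \{\mathbf{y}_{k|t}^\star\}_{k=0}^N)$ denote an optimal primal for \eqref{eq:OP_LMPC} at time $t$; the closed-loop measurement at $t+1$ is $x_{t+1}^j = x_{1|t}^\star$ by \eqref{eq:sysdyn} and \eqref{eq:LMPC}.

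I would construct a feasible candidate at time $t+1$ as follows. For $k\in\{0,\ldots,N-2\}$, shift the predicted trajectory: set $\hat u_{k|t+1} = u_{k+1|t}^\star$, $\hat x_{k|t+1} = x_{k+1|t}^\star$ and $\hat{\mathbf{y}}_{k|t+1} = \mathbf{y}_{k+1|t}^\star$. Then invoke Proposition~\ref{prop:CS_CI} on $\mathbf{y}_{N|t}^\star \in \mathcal{CS}_{\mathbf{y}}^{j-1}$ to obtain $u\in\mathcal{U}$ and $\mathbf{y}'\in\mathcal{CS}_{\mathbf{y}}^{j-1}$ with $\mathcal{F}_x(\mathbf{y}')=f(x_{N|t}^\star,u)\in\mathcal{X}$, and use these to close the horizon: $\hat u_{N-1|t+1} = u$, $\hat x_{N|t+1} = f(x_{N|t}^\star,u)$, $\hat{\mathbf{y}}_{N|t+1} = \mathbf{y}'$.

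Verification then splits into three pieces. (i) For the shifted portion $k<N-1$, the dynamics, state/input constraints, and the $\delta^-$ relation are all inherited directly from feasibility at time $t$. (ii) At $k=N-1$, dynamic consistency and $\hat u_{N-1|t+1}\in\mathcal{U}$, $\hat x_{N|t+1}\in\mathcal{X}$ come from Proposition~\ref{prop:CS_CI}, while the terminal conditions $\hat{\mathbf{y}}_{N|t+1}\in\mathcal{CS}_{\mathbf{y}}^{j-1}$ and $\hat x_{N|t+1}=\mathcal{F}_x(\hat{\mathbf{y}}_{N|t+1})$ hold by construction. (iii) The initial condition $\hat x_{0|t+1}=x_{1|t}^\star=x_{t+1}^j$ follows from closed-loop consistency. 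Iterating on $t$ then yields recursive feasibility along the entire closed-loop trajectory of iteration $j$.

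The delicate point I expect to be the main obstacle is establishing the $\delta^-$ consistency at $k=N-1$, i.e. $\mathbf{y}_{N|t}^\star = \delta^-(\mathbf{y}',\, h(x_{N|t}^\star))$. To handle this I would rely on the explicit convex representation produced in the proof of Proposition~\ref{prop:CS_CI}, equations \eqref{eq:safe_u}--\eqref{eq:time_shift_step}: since $\mathbf{y}_{N|t}^\star=\sum_{i,t}\lambda_t^i \mathbf{y}_t^i$ and $\mathbf{y}'=\sum_{i,t}\lambda_t^i \mathbf{y}_{t+1}^i$, linearity of the forward shift $\delta$ gives $\mathbf{y}'=\delta(\mathbf{y}_{N|t}^\star,\,\sum_{i,t}\lambda_t^i y_{t+R}^i)$, so that the backward shift relation holds provided the first entry of $\mathbf{y}_{N|t}^\star$ matches $h(\mathcal{F}_x(\mathbf{y}_{N|t}^\star))$; this last identity is precisely the compatibility between $h$ and $\mathcal{F}_x$ postulated in Definition~\ref{def:diffFlat} when restricted to sequences in $\mathcal{CS}_{\mathbf{y}}^{j-1}$. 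The remainder of the argument is routine bookkeeping over the shifted indices.
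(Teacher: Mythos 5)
Your candidate construction and its verification are the same shift-and-append argument the paper uses: shift the optimal solution from time $t$, invoke Proposition~\ref{prop:CS_CI} at $\mathbf{y}^\star_{N|t}\in\mathcal{CS}_{\mathbf{y}}^{j-1}$ to obtain the appended input $u\in\mathcal{U}$, successor state $f(x^\star_{N|t},u)\in\mathcal{X}$ and new terminal lifted output $\mathbf{y}'\in\mathcal{CS}_{\mathbf{y}}^{j-1}$ with $\mathcal{F}_x(\mathbf{y}')=f(x^\star_{N|t},u)$, and induct on $t$ with the base case given by $x^j_0\in\mathcal{R}^j$. In that respect the proposal is correct and matches the paper's proof.

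The one place you go astray is the final paragraph. The identity you invoke to settle the ``$\delta^-$ consistency,'' namely that $h(\mathcal{F}_x(\mathbf{y}^\star_{N|t}))$ equals the first column of $\mathbf{y}^\star_{N|t}$, is \emph{not} what Definition~\ref{def:diffFlat} provides: that definition only guarantees reconstruction along genuine output sequences of system \eqref{eq:sysdyn}, whereas $\mathbf{y}^\star_{N|t}$ is in general a convex combination of stored sequences, and neither $h$ nor $\mathcal{F}_x$ is assumed linear, so the claimed compatibility need not hold on $\mathcal{CS}_{\mathbf{y}}^{j-1}$. Fortunately the issue is self-inflicted: in \eqref{eq:OP_LMPC} the intermediate variables $\mathbf{y}_{k|t}$, $k<N$, are not constrained to lie in any set --- the relations $\mathbf{y}_{k|t}=\delta^-(\mathbf{y}_{k+1|t},h(x_{k|t}))$ simply define them recursively from the terminal variable $\mathbf{y}_{N|t}$ and the predicted states. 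So for feasibility you should not fix $\hat{\mathbf{y}}_{N-1|t+1}=\mathbf{y}^\star_{N|t}$; instead take the candidate inputs $\{u^\star_{1|t},\dots,u^\star_{N-1|t},u\}$ together with $\hat{\mathbf{y}}_{N|t+1}=\mathbf{y}'$, and let the $\delta^-$ recursion generate the intermediate $\hat{\mathbf{y}}_{k|t+1}$; every constraint of \eqref{eq:OP_LMPC} at time $t+1$ is then satisfied by construction, with no compatibility identity needed. (The question of whether the shifted lifted outputs coincide with $\mathbf{y}^\star_{k+1|t}$ only becomes relevant when bounding the candidate cost in the convergence argument of Theorem~\ref{thm:convergence}, which is outside the present statement.)
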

\begin{proof}
For any iteration $j$, suppose that the problem \eqref{eq:OP_LMPC} is feasible at time $t$. Let the state-input trajectory corresponding to the optimal solution be \small\begin{equation}\label{eq:opt_sol_t}
    \{(x^\star_{0|t},u^\star_{0|t}), (x^\star_{1|t},u^\star_{1|t}),\dots, x^\star_{N|t}\}
\end{equation}\normalsize  with $x^\star_0=x^j_t$. Applying the LMPC control \eqref{eq:LMPC} $u^j_t=u^\star_{0|t}$ to system \eqref{eq:sysdyn} yields $x^j_{t+1}=x^\star_{1|t}$. Since \eqref{eq:opt_sol_t} is feasible for \eqref{eq:OP_LMPC}, we have
\small$$\mathbf{y}^\star_{N|t}\in\mathcal{CS}_{\mathbf{y}}^{j-1}, \quad x^\star_{N|t}=\mathcal{F}_x(\mathbf{y}_{N|t}^\star).$$\normalsize
From \eqref{eq:imp_1_CI} of proposition \ref{prop:CS_CI}, we have $x^\star_{N|t}\in\mathcal{X}$. From \eqref{eq:imp_2_CI}, we have $\mathcal{CS}_{\mathbf{y}}^{j-1}\ni \mathbf{y}'=\delta(\mathbf{y}_{N|t}^\star,y')$ and $\mathcal{U}\ni\tilde{u}=\mathcal{F}_u([y^\star_{N|t},\mathbf{y}'])$ such that $\mathcal{F}_x(\mathbf{y}')=f(x^\star_{N|t},\tilde{u})$. Finally from \eqref{eq:imp_3_CI}, we have $f(x^\star_{N|t},\tilde{u})=\tilde{x}\in\mathcal{X}$. Now consider the following state-input trajectory\small
\begin{equation}\label{eq:shift_sol_t}
    \{(x^\star_{1|t},u^\star_{1|t}),\dots, (x^\star_{N|t},\tilde{u}), \tilde{x}\}
\end{equation}\normalsize
This is feasible for the LMPC problem \eqref{eq:OP_LMPC} at time $t+1$. \\
We have shown that feasibility of the LMPC problem~\eqref{eq:OP_LMPC} at time $t$ implies feasibility of the LMPC problem~\eqref{eq:OP_LMPC} at time $t+1$. By assumption \ref{ass:SSinit}, we readily have $\cup^\infty_{t=0}\{\mathcal{F}_x(\mathbf{y}_t^0)\}\subseteq\mathcal{R}^{1}$. Also since $\mathcal{CS}_{\mathbf{y}}^{j-1}\subseteq\mathcal{CS}_{\mathbf{y}}^{j}$ by construction, we have $\mathcal{R}^j\neq\emptyset,~\forall j\geq 1$. So at $t=0$, the LMPC problem~\eqref{eq:OP_LMPC} is feasible with $x^j_0\in\mathcal{R}^j$ for all iterations $j\geq 1$. Thus induction on time $t$ proves the theorem.
\end{proof}
\subsection{Convergence}\label{ssec:convergence}
To establish convergence of the closed-loop trajectories of \eqref{eq:sysdyn} to the equilibrium $x_F$, we first present a lemma that shows that if $\lim_{t\rightarrow\infty}y_t=y_F$ then $\lim_{t\rightarrow\infty}x_t=x_F$. We use the result of this lemma to finally we show that $\lim_{t\rightarrow\infty}x^j_t=x_F$ in the proof of Theorem~\ref{thm:convergence}.
\begin{lemma}\label{lem:yconv_xconv}
If the trajectory of outputs $\{y_t\}_{t\geq0}$ for system \eqref{eq:sysdyn} converges to $y_F$ then the state trajectory $\{x_t\}_{t\geq 0}$ converges to $x_F$,
$$\lim_{t\rightarrow\infty}y_t=y_F\Rightarrow \lim_{t\rightarrow\infty}x_t=x_F$$
\end{lemma}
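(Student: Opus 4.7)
The plan is a direct continuity argument using Assumption~\ref{ass:flat_prop}\eqref{ass:flat_class} and \eqref{ass:flat_cont}, with a small preliminary step that identifies $\mathcal{F}_x(\mathbf{y}_F) = x_F$.

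First, I would establish the identification $\mathcal{F}_x(\mathbf{y}_F) = x_F$, where $\mathbf{y}_F = [y_F,\ldots,y_F] \in \mathbb{R}^{m\times R}$. This uses that $x_F = f(x_F,0)$, so applying zero input from $x_F$ indefinitely produces the constant state trajectory $x_t \equiv x_F$, hence the constant output trajectory $y_t = h(x_F) = y_F$ for all $t$. By the unique reconstruction property of Definition~\ref{def:diffFlat} specialised via \eqref{eq:flatclass_x}, we must have $\mathcal{F}_x([y_F,\ldots,y_F]) = x_F$.

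Next, from the hypothesis $y_t \to y_F$, for each fixed shift $k \in \{0,1,\ldots,R-1\}$ the subsequence $\{y_{t+k}\}_{t\geq0}$ also converges to $y_F$. Stacking columns, this yields $\mathbf{y}_t = [y_t, y_{t+1},\ldots,y_{t+R-1}] \to [y_F,\ldots,y_F] = \mathbf{y}_F$ in $\mathbb{R}^{m\times R}$ as $t\to\infty$.

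Finally, I invoke Assumption~\ref{ass:flat_prop}\eqref{ass:flat_cont}: continuity of the joint map $\mathcal{F} = (\mathcal{F}_x, \mathcal{F}_u)$ at $\mathbf{Y}_F \in \mathbb{R}^{m\times R+1}$ implies, in particular, continuity of the first component $\mathcal{F}_x$ (which depends only on the first $R$ columns, by \eqref{ass:flat_class}) at $\mathbf{y}_F \in \mathbb{R}^{m\times R}$. Therefore
\[
\lim_{t\to\infty} x_t \;=\; \lim_{t\to\infty} \mathcal{F}_x(\mathbf{y}_t) \;=\; \mathcal{F}_x(\mathbf{y}_F) \;=\; x_F,
\]
which concludes the argument.

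The only subtle point, and the closest thing to an obstacle, is carefully separating the roles of $\mathcal{F}$ (defined on $\mathbb{R}^{m\times R+1}$) and $\mathcal{F}_x$ (which only reads $R$ columns): one needs to argue that continuity of the joint map at the constant matrix $\mathbf{Y}_F$ implies continuity of $\mathcal{F}_x$ at $\mathbf{y}_F$. This is immediate because $\mathcal{F}_x$ factors through the projection onto the first $R$ columns, and projections are continuous. Everything else is routine.
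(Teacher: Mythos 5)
Your proof is correct and follows essentially the same route as the paper's (sketched) argument: shift-convergence of the stacked outputs $\mathbf{y}_t\to\mathbf{y}_F$, followed by continuity of the lifted-output map at $\mathbf{Y}_F$ (Assumption~\ref{ass:flat_prop}) combined with the unique reconstruction property of Definition~\ref{def:diffFlat}. In fact you fill in details the paper only alludes to, namely the identification $\mathcal{F}_x(\mathbf{y}_F)=x_F$ via the equilibrium $x_F=f(x_F,0)$ and the passage from continuity of $\mathcal{F}$ on $\mathbb{R}^{m\times R+1}$ to continuity of $\mathcal{F}_x$ on $\mathbb{R}^{m\times R}$.
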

\begin{proof}
We sketch the proof of the statement, which proceeds in two steps. First, one can show that \small
$$\lim\limits_{t\rightarrow\infty}y_t=y_F\Rightarrow \lim\limits_{t\rightarrow\infty}\mathbf{y}_t=[y_F ,y_F,\dots, y_F]\in\mathbb{R}^{m\times R} $$\normalsize

Then the following implication \small
\begin{align}\label{eq:yflatconv_xconv}
    \lim\limits_{t\rightarrow\infty}\mathbf{y}_t=[y_F ,y_F,\dots, y_F]\Rightarrow\lim\limits_{t\rightarrow\infty}x_t=x_F
\end{align}\normalsize
% For the first part, we have from the definition of convergence of a sequence that $\forall \frac{\epsilon}{R}>0$, $\exists K$ such that
% $$\Vert y_k -y_F\Vert \leq \frac{\epsilon}{R}\quad \forall k\geq K$$
% For the same $K>0$, we can write for $\mathcal{Y}_k=(y_k,y_{k+1},\dots,y_{k+R-1}),\ \forall k\geq K$ the following bound
% \begin{align*}
%     \Vert \mathcal{Y}_k-(y_F ,y_F,\dots, y_F)\Vert&\leq\sum\limits_{i=0}^{R-1}\Vert y_{k+i}-y_F\Vert\\
%     &\leq \sum\limits_{i=0}^{R-1}\frac{\epsilon}{R}=\epsilon
% \end{align*}
% Using the definition of convergence of a sequence, we have thus shown the first part of the proof. 
follows from continuity of the flat map at $\mathbf{Y}_F=[y_F ,y_F,\dots, y_F]$ (Assumption \ref{ass:flat_prop}\eqref{ass:flat_cont}) and the fact that the image of the flat map \eqref{eq:flatclass_x} is unique (Definition~\ref{def:diffFlat}).
\end{proof}
\begin{theorem}\label{thm:convergence}
For any iteration $j\geq 1$, the system trajectory of \eqref{eq:sysdyn} in closed-loop with control $\eqref{eq:LMPC}$ converges to unforced equilibrium $x_F$,
$$\lim_{t\rightarrow\infty} x^j_t=x_F$$
with $x^j_0\in\mathcal{R}^{j}$.
\end{theorem}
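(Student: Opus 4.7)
The strategy is the classical MPC Lyapunov argument adapted to the lifted-output setting: treat $J^j_{t\rightarrow t+N}(x^j_t)$ as a Lyapunov function on the closed-loop, with $Q^{j-1}(\cdot)$ playing the role of the terminal Lyapunov function via Proposition~\ref{prop:Q_CLF}, and then transfer the resulting lifted-output convergence to state convergence through Lemma~\ref{lem:yconv_xconv}.

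First, I would derive a closed-loop descent inequality. Reusing the shifted feasible candidate~\eqref{eq:shift_sol_t} from the proof of Theorem~\ref{thm:rf}, its cost differs from $J^j_{t\rightarrow t+N}(x^j_t)$ by dropping the initial running cost $C(\mathbf{y}^j_t)$, adding an extra $C(\mathbf{y}^\star_{N|t})$, and replacing $Q^{j-1}(\mathbf{y}^\star_{N|t})$ by $Q^{j-1}$ at the time-shifted terminal output. Invoking Proposition~\ref{prop:Q_CLF}, part 2, cancels the $C(\mathbf{y}^\star_{N|t})$ mismatch, so suboptimality at time $t+1$ yields
\[
J^j_{t+1\rightarrow t+1+N}(x^j_{t+1})\leq J^j_{t\rightarrow t+N}(x^j_t)-C(\mathbf{y}^j_t).
\]

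Second, Theorem~\ref{thm:rf} guarantees $J^j_{t\rightarrow t+N}(x^j_t)<\infty$ for every $t\geq 0$ when $x^j_0\in\mathcal{R}^j$, and nonnegativity together with the descent inequality imply $\sum_{t\geq 0}C(\mathbf{y}^j_t)\leq J^j_{0\rightarrow N}(x^j_0)<\infty$, hence $C(\mathbf{y}^j_t)\rightarrow 0$. Recursive feasibility also keeps $x^j_t\in\mathcal{X}$ for all $t$, so by compactness of $\mathcal{X}$ (Assumption~\ref{ass:box}) and continuity of $h$ the lifted outputs $\{\mathbf{y}^j_t\}$ lie in a fixed compact set. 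A standard subsequence/contradiction argument then upgrades $C(\mathbf{y}^j_t)\rightarrow 0$ to $\mathbf{y}^j_t\rightarrow \mathbf{y}_F$ using continuity and positive definiteness of $C(\cdot)$: any subsequence bounded away from $\mathbf{y}_F$ would admit an accumulation point $\mathbf{y}^\ast\neq \mathbf{y}_F$ with $C(\mathbf{y}^\ast)>0$, contradicting $C(\mathbf{y}^j_t)\rightarrow 0$. Finally, the implication~\eqref{eq:yflatconv_xconv} of Lemma~\ref{lem:yconv_xconv}, which rests on Assumption~\ref{ass:flat_prop}\eqref{ass:flat_cont}, converts this into $x^j_t\rightarrow x_F$.

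The main obstacle is the bookkeeping in the first step: I must verify that the CLF decrement of Proposition~\ref{prop:Q_CLF} exactly cancels the stage-cost mismatch between the incumbent and candidate summations, so that the closed-loop drop is precisely $-C(\mathbf{y}^j_t)$ rather than something weaker. Everything afterward is a routine Lyapunov argument, provided one is careful to invoke Assumption~\ref{ass:flat_prop}\eqref{ass:flat_cont} in the last step so that lifted-output convergence lifts back to state convergence via the continuity of $\mathcal{F}_x$ at $\mathbf{Y}_F$.
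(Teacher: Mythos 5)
Your proposal is essentially the paper's own proof: the same descent inequality \eqref{eq:cost_decrease} obtained from the shifted candidate \eqref{eq:shift_sol_t} and the CLF decrement of Proposition~\ref{prop:Q_CLF}, followed by $C(\mathbf{y}^j_t)\rightarrow 0$ (your summability version is equivalent to the paper's monotone-and-bounded value-function argument) and the transfer to $x^j_t\rightarrow x_F$ via Lemma~\ref{lem:yconv_xconv}. One small caveat: Assumption~\ref{ass:box} does not actually guarantee compactness of $\mathcal{X}$ (e.g., the DC-motor example has an unbounded state component), but your subsequence step survives without it, since convexity, continuity and positive definiteness of $C(\cdot)$ already force its sublevel sets to shrink to $\{\mathbf{y}_F\}$, which is what the paper implicitly uses.
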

\begin{proof}
At any iteration $j\geq 1$, the LMPC problem \eqref{eq:OP_LMPC} is time-invariant. This in turn implies that system \eqref{eq:sysdyn} in closed-loop with the LMPC control \eqref{eq:LMPC} is time-invariant and so we simply analyse the LMPC cost function $J^j_{0\rightarrow N}(\cdot)$ instead of $J^j_{t\rightarrow  t+N}(\cdot)$.
% Our stability proof closely follows \cite{lazar2006stabilizing} to accommodate possibly hybrid systems.\par
% Note that the positive definiteness of $c(\cdot)$ implies that $\exists$ a class $\kappa$ function $\alpha_1(\cdot)$ \cite{khalil2002nonlinear} such that
% $ c(y)\geq \alpha_1(||y-y_F||)$. The terminal cost $Q^{j-1}(\cdot)$ is positive definite, and constructed using Barycentric interpolation of costs of successful trajectories in $\mathcal{SS}^{j-1}$, which means it is bounded. So $\exists$ a class $\kappa$ function $\alpha_2(\cdot)$ such that $Q^{j-1}(\mathbf{y})\leq \alpha_2(||\mathbf{y}-\mathbf{y}_F||)$.\par
We adopt the same notation as the proof of theorem \ref{thm:rf}. Using the feasibility of \eqref{eq:shift_sol_t} for the LMPC problem at time $t+1$ and the fact that $J^{j}_{0\rightarrow N}(x^j_{t+1})$ is the optimal cost of problem \eqref{eq:OP_LMPC} at time $t+1$, we get\small
\begin{align}\label{eq:cost_decrease}
    J^j_{0\rightarrow N}(x^j_{t+1})&\leq Q^{j-1}(\mathbf{y}')+C(\mathbf{y}_{N|t}^\star)+\sum\limits_{k=1}^{N-1}C(\mathbf{y}^\star_{k|t})\nonumber\\
    &\leq Q^{j-1}(\mathbf{y}^\star_{N|t})+\sum\limits_{k=1}^{N-1}C(\mathbf{y}^\star_{k|t})\nonumber\\
    &=J^j_{0\rightarrow N}(x^j_t)-C(\mathbf{y}^j_t).
\end{align}\normalsize
Notice that the feasibility of the LMPC problem at all time steps is guaranteed by Theorem~\ref{thm:rf}.
Recursive feasibility and positive definiteness of $C(\cdot)$ imply that the sequence $\{J^j_{0\rightarrow N}(x^j_t)\}_{t\geq 0}$ is non-increasing. Moreover, positive definiteness of $Q^{j-1}(\cdot)$ further implies that sequence is lower bounded by $0$. Thus the sequence converges to some limit and taking limits on both sides of \eqref{eq:cost_decrease} gives\small
$$0\leq \lim_{t\rightarrow\infty}C(\mathbf{y}^j_t)\leq 0\Rightarrow \lim_{t\rightarrow\infty}C(\mathbf{y}^j_t)=0$$\normalsize
Continuity of $C(\cdot)$ and property \eqref{stage_cost} further imply that $\lim_{t\rightarrow\infty} \mathbf{y}^j_t=\mathbf{y}_F$. Finally using lemma \ref{lem:yconv_xconv} proves our claim,\small
$$\lim_{t\rightarrow\infty}\mathbf{y}^j_t=\mathbf{y}_F\Rightarrow \lim_{t\rightarrow\infty}x^j_t=x_F$$\normalsize
\end{proof}
\subsection{Performance Improvement}
We conclude our theoretical analysis of the proposed LMPC \eqref{eq:LMPC} with the following theorem. We state and prove that the closed-loop costs of system trajectories in closed-loop with the LMPC do not increase with iterations if the system starts from the same state, i.e., $x^j_0=x_S~\forall j\geq 0$.
\begin{theorem}\label{thm:cost_imp}
The cost of the trajectories of system \eqref{eq:sysdyn} in closed-loop with the LMPC \eqref{eq:LMPC} does not increase with iterations,
$$j_2>j_1\Rightarrow J^{j_2}_{0\rightarrow \infty}(x_S)\leq J^{j_1}_{0\rightarrow \infty}(x_S)$$
where $J^{j}_{0\rightarrow \infty}(x_S)=\mathcal{C}^j_0$.
\end{theorem}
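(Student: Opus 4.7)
The plan is to establish the chain of inequalities $\mathcal{C}^{j+1}_0 \leq J^{j+1}_{0\to N}(x_S) \leq \mathcal{C}^j_0$ for any two consecutive iterations, and then conclude by a straightforward induction on the iteration index. Since $\mathcal{C}^j_0$ is by definition the closed-loop cost $J^{j}_{0\to\infty}(x_S)$, this gives the theorem.

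For the first (left) inequality, I would reuse the per-step cost-decrease inequality \eqref{eq:cost_decrease} derived in the proof of Theorem \ref{thm:convergence}, which yields $C(\mathbf{y}^{j+1}_t) \leq J^{j+1}_{0\to N}(x^{j+1}_t) - J^{j+1}_{0\to N}(x^{j+1}_{t+1})$ for every $t \geq 0$. Summing telescopically from $t=0$ to $t=T$ gives $\sum_{t=0}^{T} C(\mathbf{y}^{j+1}_t) \leq J^{j+1}_{0\to N}(x_S) - J^{j+1}_{0\to N}(x^{j+1}_{T+1})$. Since both the stage cost $C(\cdot)$ and the terminal cost $Q^{j}(\cdot)$ are non-negative (Proposition \ref{prop:Q_CLF}), the open-loop cost $J^{j+1}_{0\to N}(\cdot) \geq 0$, so letting $T \to \infty$ yields $\mathcal{C}^{j+1}_0 = \sum_{t \geq 0} C(\mathbf{y}^{j+1}_t) \leq J^{j+1}_{0\to N}(x_S)$.

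For the second (right) inequality, I would construct an explicit feasible candidate for problem \eqref{eq:OP_LMPC} at iteration $j+1$ and time $t=0$ using the previously executed trajectory at iteration $j$, namely $\{(x^j_0, u^j_0), (x^j_1, u^j_1), \dots, (x^j_{N-1}, u^j_{N-1}), x^j_N\}$, which starts at $x^j_0 = x_S$. By Assumption \ref{ass:SSinit} and the recursive construction of $\mathcal{SS}^{j}_{\mathbf{y}}$, we have $\mathbf{y}^j_N \in \mathcal{SS}^{j}_{\mathbf{y}} \subseteq \mathcal{CS}^{j}_{\mathbf{y}}$, so the terminal constraint and all stage constraints are met. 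The associated cost equals $\sum_{k=0}^{N-1} C(\mathbf{y}^j_k) + Q^{j}(\mathbf{y}^j_N)$. The barycentric definition \eqref{eq:conv_cf} of $Q^j$ admits the feasible choice of setting the multiplier equal to $1$ at the tuple $(\mathbf{y}^j_N, \mathcal{C}^j_N)$ and zero elsewhere, yielding $Q^{j}(\mathbf{y}^j_N) \leq \mathcal{C}^j_N$. Hence the candidate's cost is at most $\sum_{k=0}^{N-1} C(\mathbf{y}^j_k) + \mathcal{C}^j_N = \mathcal{C}^j_0$, and by optimality of the LMPC problem we get $J^{j+1}_{0\to N}(x_S) \leq \mathcal{C}^j_0$.

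Combining the two inequalities gives $\mathcal{C}^{j+1}_0 \leq \mathcal{C}^j_0$, and induction on $j$ delivers the claim for any $j_2 > j_1$. The argument is largely bookkeeping; the only subtle step is verifying that the bound $Q^{j}(\mathbf{y}^j_N) \leq \mathcal{C}^j_N$ follows directly from the definition of $Q^{j}$ as a barycentric minimization, which relies crucially on $\mathbf{y}^j_N$ itself being one of the points used to construct $\mathcal{SS}^{j}_{\mathbf{y}}$.
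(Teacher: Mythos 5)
Your proposal is correct and takes essentially the same route as the paper: the same decomposition into (i) bounding the new iteration's LMPC cost $J^{j+1}_{0\rightarrow N}(x_S)$ by the previous realized cost $\mathcal{C}^j_0$ via the old trajectory as a feasible candidate together with the barycentric bound $Q^{j}(\mathbf{y}^j_N)\leq \mathcal{C}^j_N$ from \eqref{eq:conv_cf}, and (ii) telescoping the decrease inequality \eqref{eq:cost_decrease} to bound the realized cost by $J^{j+1}_{0\rightarrow N}(x_S)$. The only minor difference is in step (ii), where you simply drop the non-negative tail term $J^{j+1}_{0\rightarrow N}(x^{j+1}_{T+1})$ instead of arguing, as the paper does, that it vanishes in the limit by continuity at $x_F$ and Theorem~\ref{thm:convergence}; both are valid, and yours is slightly more economical.
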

\begin{proof}
The proof follows \cite{UgoTAC} closely. The cost of the trajectory in iteration $j-1$ is given by\small
\begin{align*}
    J^{j-1}_{0\rightarrow \infty}(x_S)&=\sum\limits_{t\geq 0} C(\mathbf{y}^{j-1}_t)\\
    &=\sum\limits_{t=0}^{N-1}C(\mathbf{y}^{j-1}_t)+\mathcal{C}^{j-1}_N\\
    &\geq\sum\limits_{t=0}^{N-1}C(\mathbf{y}^{j-1}_t)+Q^{j-1}(\mathbf{y}^{j-1}_N)\\
    &\geq J^j_{0\rightarrow N}(x_S)
\end{align*}\normalsize
The second to last inequality comes from the definition of $Q^{j-1}(\cdot)$ in \eqref{eq:conv_cf} while the last inequality comes from optimality of problem \eqref{eq:OP_LMPC} in the $j$th iteration starting from $x^j_0=x_S$.\\
Noting that $x_S=\mathcal{F}_x(\mathbf{y}^j_0)$, we use inequality \eqref{eq:cost_decrease} repeatedly to derive\small
\begin{align*}
    J^j_{0\rightarrow N}(x_S)\geq& C(\mathbf{y}^j_0)+J^j_{0\rightarrow N}(x^j_1)\\
    \geq& C(\mathbf{y}^j_0)+C(\mathbf{y}^j_1)+J^{j}_{0\rightarrow N}(x^j_2)\\
    \geq& \lim_{t\rightarrow\infty}( \sum\limits_{k=0}^{t-1}C(\mathbf{y}^j_k)+J^j_{0\rightarrow N}(x^j_t))
\end{align*}\normalsize
Observe that at $x_F$, the cost in \eqref{eq:OP_LMPC} $J^j_{0\rightarrow N}(x_F)=0$. Moreover, continuity of the dynamics \eqref{eq:sysdyn} and cost $C(\cdot)$ at $\mathbf{y}_F$ imply that $J^j_{0\rightarrow N}(\cdot)$ at $x_F$. Computing the above limit finally gives us
\small$$J^{j-1}_{0\rightarrow\infty}(x_S)\geq J^j_{0\rightarrow N}(x_S)\geq J^{j}_{0\rightarrow\infty}(x_S)$$\normalsize
The desired statement easily follows from above.   
\end{proof}
\subsection{Extension for Input Costs}\label{input_costs}
In this section, we show that our framework applies for costs of the form $c(x,u)$ as well. First, we augment system \eqref{eq:sysdyn} with input as a state with first-order compensator dynamics to get the \textit{augmented system}\small
\begin{align}\label{eq:aug_sysdyn}
    \tilde{x}_{t+1}=\begin{bmatrix}x_{t+1}\\u_{t+1}\end{bmatrix}=\tilde{f}(\tilde{x}_{t},z_t)=\begin{bmatrix}f(x_t,u_t)\\\alpha u_t+\beta z_t\end{bmatrix}
\end{align}\normalsize
with any scalars $\alpha,\beta\in\mathbb{R}$. Observe that if $\mathbf{Y}_t$ is a lifted output with $y_t=h(x_t), (x_t,u_t)=(\mathcal{F}_x(\mathbf{y}_t)$, $\mathcal{F}_u([\mathbf{y}_t,y_{t+R}])$ for \eqref{eq:sysdyn}, then $[\mathbf{Y}_t, y_{t+R+1}]$ is a lifted output for system \eqref{eq:aug_sysdyn} with $y_t=\tilde{h}(\tilde{x}_t)=h(x_t)$ and \small
\begin{align}
    \tilde{x}_t&=\tilde{\mathcal{F}}_x([\mathbf{y}_t,y_{t+R}])=\begin{bmatrix}\mathcal{F}_x(\mathbf{y}_t)\\\mathcal{F}_u([\mathbf{y}_t,y_{t+R}])\end{bmatrix}\label{eq:flatclass_x_aug}\\
    z_t&=\tilde{\mathcal{F}}_u([\mathbf{y}_t,y_{t+R},y_{t+R+1}])\nonumber\\
       &=\frac{1}{\beta}\left(\mathcal{F}_u([\delta(\mathbf{y}_t,y_{t+R}),y_{t+R+1}])-\alpha\mathcal{F}_u([\mathbf{y}_t,y_{t+R}])\right).\label{eq:flatclass_u_aug}
\end{align}\normalsize
So system \eqref{eq:aug_sysdyn} clearly satisfies Assumptions~\ref{ass:flat_prop}\eqref{ass:flat_class}, \ref{ass:flat_prop}\eqref{ass:flat_cont}. Moreover from \eqref{eq:flatclass_x_aug}, we see that $\tilde{\mathcal{F}}_x(\cdot)$ has the desired monotonicity properties in Assumption~\ref{ass:flat_prop}\eqref{ass:flatmap_convex}. If Assumption~\ref{ass:box} is in place, then $z_t$ isn't constrained explicitly and so $\tilde{\mathcal{F}}_u(\cdot)$ need not be monotonic as in Assumption~\ref{ass:flat_prop}\eqref{ass:flatmap_convex}.

For the augmented system \eqref{eq:aug_sysdyn}, we now define the \textit{Output Safe Set} as \small
\begin{align}\label{eq:SSdef_aug}
    \tilde{\mathcal{SS}}_{\mathbf{y}}^{j-1}=\bigcup\limits_{i\in\mathcal{I}_j}\bigcup\limits_{k=0}^{\infty}\big\{[\mathbf{y}^i_k,y^i_{k+R}]\big\}.
\end{align}\normalsize
Taking the convex hull of this set, we now define the \textit{Convex Output Safe Set}\small
\begin{align}\label{eq:CSdef_aug}
    \tilde{\mathcal{CS}}_{\mathbf{y}}^{j-1}&=\textrm{conv}(\tilde{\mathcal{SS}}_{\mathbf{y}}^{j-1}).
\end{align}\normalsize
The cost-to-go is defined on points in $\tilde{\mathcal{SS}}_{\mathbf{y}}^{j-1}$ as\small
\begin{align}\label{eq:ctg_aug}
    \tilde{\mathcal{C}}^i_t&=\sum\limits_{k\geq t} \tilde{C}(\tilde{\mathbf{y}}^i_k)
\end{align}\normalsize
where $\tilde{C}(\cdot)$ is a convex, continuous function which satisfies\small
\begin{align}\label{stage_cost_aug}
    \tilde{C}([\mathbf{y}_F,y_F])=0,\ C(\tilde{\mathbf{y}})\succ 0, \ \forall \tilde{\mathbf{y}}\in\mathbb{R}^{m\times R+1}\backslash \{[\mathbf{y}_F,y_F]\}.
\end{align}\normalsize
Unlike $C(\cdot)$, the function $\tilde{C}(\cdot)$ penalises both state and input implicitly via \eqref{eq:flatclass_x_aug}. The terminal cost is defined on $\tilde{\mathbf{y}}\in\tilde{\mathcal{CS}}_{\mathbf{y}}^{j-1}$ as\small
\begin{equation}\label{eq:conv_cf_aug}
\begin{aligned}
    \tilde{Q}^{j-1}(\tilde{\mathbf{y}})=\min\limits_{\substack{\lambda^i_k\in[0,1] \\ \forall i\in\mathcal{I}_{j-1}}} \quad & \sum\limits_{i\in\mathcal{I}_{j-1}}\sum\limits_{k\geq 0}\lambda^i_k\tilde{\mathcal{C}}^i_k\\[1ex]
		\text{s.t.} ~~\quad & \sum\limits_{i\in\mathcal{I}_{j-1}}\sum\limits_{k\geq 0}\lambda^i_k[\mathbf{y}^i_k,y^i_{k+R}]=\tilde{\mathbf{y}},\\
		& \sum\limits_{i\in\mathcal{I}_{j-1}}\sum\limits_{k\geq0}\lambda^i_k=1
\end{aligned}
\end{equation}\normalsize
With these revised definitions, all our results in Sections~\ref{sec:LMPCF} and \ref{sec:LMPC_analysis} still apply for system \eqref{eq:aug_sysdyn}.

% \subsection*{Warm-Starting Mode Sequences}
% To speed-up the optimization algorithm for solving the Mixed-Integer Optimization Problem \eqref{eq:OP_MPC}, we could supply a warm-start for the mode sequence using the Safe-Set as in \cite{mip}
\section{Numerical Examples}\label{sec:ex}
In this section, we first present results on three numerical experiments: (\ref{eg:PWA}) PWA system, (\ref{eg:uni}) Kinematic unicycle and (\ref{eg:DC}) Bilinear DC Motor. In each of these examples, we outline the system model, the associated lifted outputs and the components of the constrained optimal control problem. 
% \subsection{Linear System}
% \begin{figure}[h]
%     \centering
%     \includegraphics[scale=0.32]{}
%     \caption{Closed loop realization for system \eqref{eq:Linear_eg} with LMPC. The coloured portion of the state space represents the mode $x_1\geq -2$.}
%     \label{fig:sim_eg}
% \end{figure}
% We demonstrate the framework of LMPC presented in this paper on the following linear system. Note that unlike \cite{rosolia2017learning}, the terminal cost and safe set are defined on the space of a sequence of $R=2$ outputs, $[y_k\ y_{k+1}]^T$ and not on the state space $x_k$.
% \begin{align}\label{eq:Linear_eg}
% x_{k+1}&=\begin{bmatrix}1&1\\0&1\end{bmatrix}x_k+\begin{bmatrix}0\\1\end{bmatrix}u_k\\
% y_k&=[1\ 0]x_k
% \end{align}
% The output $y_k$ is flat with the associated flat maps defined as follows
% \begin{align}
%     \mathcal{F}_x(y_k,y_{k+1})&=\begin{bmatrix}1&0\\-1&1\end{bmatrix}\begin{bmatrix}y_k\\y_{k+1}\end{bmatrix}\\
%     \mathcal{F}_u(y_k,y_{k+1},y_{k+2})&=\begin{bmatrix}1&-2&1\end{bmatrix}\begin{bmatrix}y_k\\y_{k+1}\\y_{k+2}\end{bmatrix}
% \end{align}
% The state and input constraints are given by $\mathca{X}=[-15,15]\times[-15,15]$, $\mathcal{U}=[-5,5]$. We minimize the sum of convex stage costs $c(y_k,u_k)=5y^2_k+u^2_k$.
% \begin{figure}[h]
%     \centering
%     \includegraphics[scale=0.42]{}
%     \caption{Cost incurred per iteration}
%     \label{fig:it_Cost}
% \end{figure}.
\subsection{PWA System}\label{eg:PWA}

We implement the proposed framework on the following Piecewise Affine (PWA) system:
\small
\begin{align}\label{eq:PWA_eg}
    x_{k+1}&=
    \begin{cases*}
    \begin{bmatrix}
    1& 0.2\\0&1
    \end{bmatrix}x_k+\begin{bmatrix}0\\1\end{bmatrix}u_k & if $[1\ 0]x_k\leq-2$\\
    \begin{bmatrix}
    1& 0.2\\0.5&1
    \end{bmatrix}x_k+\begin{bmatrix}0\\1\end{bmatrix}u_k+\begin{bmatrix}0\\1\end{bmatrix} & if $[1\ 0]x_k\geq-2$
    \end{cases*}
\end{align}
\normalsize
The lifted output and associated maps are given by 
\small
\begin{align}
    y_k&=\begin{bmatrix}1 & 0\end{bmatrix}x_k,\ \mathbf{Y}_k=[y_k,y_{k+1},y_{k+2}] \label{eq:op_eg}\\
    \mathcal{F}_x(y_k,y_{k+1})&=\begin{bmatrix}1& 0\\-5&5\end{bmatrix}\begin{bmatrix}y_k\\y_{k+1}\end{bmatrix}\label{eq:flat_x_eg}\\
    \mathcal{F}_u(y_k,y_{k+1},y_{k+2})&=\begin{cases*}
    [5\ -10\ 5]\begin{bmatrix}y_k\\y_{k+1}\\y_{k+2}\end{bmatrix} & if $y_k\leq-2$\\
    [4.5\ -10\ 5]\begin{bmatrix}y_k\\y_{k+1}\\y_{k+2}\end{bmatrix}-1 & if $y_k\geq-2$
    \end{cases*}\label{eq:flat_u_eg}
\end{align}
\normalsize
The map in\eqref{eq:flat_x_eg} is linear and the map in \eqref{eq:flat_u_eg} is monotonic and continuous. The state and input constraints are given by $\mathcal{X}=[-5,0]\times[0,6]$, $\mathcal{U}=[-10,2]$. In order to drive the system from the initial condition $x_S=[-5,0]$ to the origin, we minimize the sum of convex stage costs $C(\mathbf{y}_k)=5(y^2_k+y^2_{k+1})=x_k^\top\begin{bmatrix}10&1\\1&\frac{1}{5}\end{bmatrix}x_k$ over a prediction horizon $N=3$. The terminal set and terminal cost are constructed as \eqref{eq:CSdef} and \eqref{eq:conv_cf} respectively. The dynamics \eqref{eq:PWA_eg} are posed as constraints in the optimization problem \eqref{eq:OP_LMPC} using the Big-M formulation \cite{marcucci2019mixed} for a prediction horizon $N=3$.  The resulting problem is a MIQP with $3$ binary variables which we solve using GUROBI (one binary variable per time step $k$ to decide if $[1 0]x_k\leq-2$ or not along the prediction horizon $N=3$). Note that for the same problem, the formulation in \cite{UgoTAC} would have required $6+\vert \mathcal{SS}^{j-1}\vert$ binary variables where $\vert \mathcal{SS}^{j-1}\vert$ is the number of points in the Safe Set \eqref{eq:SS_def} at iteration $j$. At iterations $j=1,5,9$ of this example, the Safe Set \eqref{eq:SS_def} had $\vert \mathcal{SS}^{0}\vert=32$ points, $\vert \mathcal{SS}^{4}\vert=73$, $\vert \mathcal{SS}^{8}\vert=113$ points respectively.

We see that the proposed controller successfully steers the PWA system \eqref{eq:PWA_eg} to the origin (Figure \ref{fig:sim_eg}), while meeting state constraints and input constraints. The trajectory costs $J^j_{0\rightarrow\infty}=\sum_{k\geq0}C(\mathbf{y}^j_k)$ are non-increasing with iteration $j$ as is evident in Table~\ref{tab:Cost_PWA}.

\begin{figure}[h]
    \centering
    \includegraphics[width=1\linewidth]{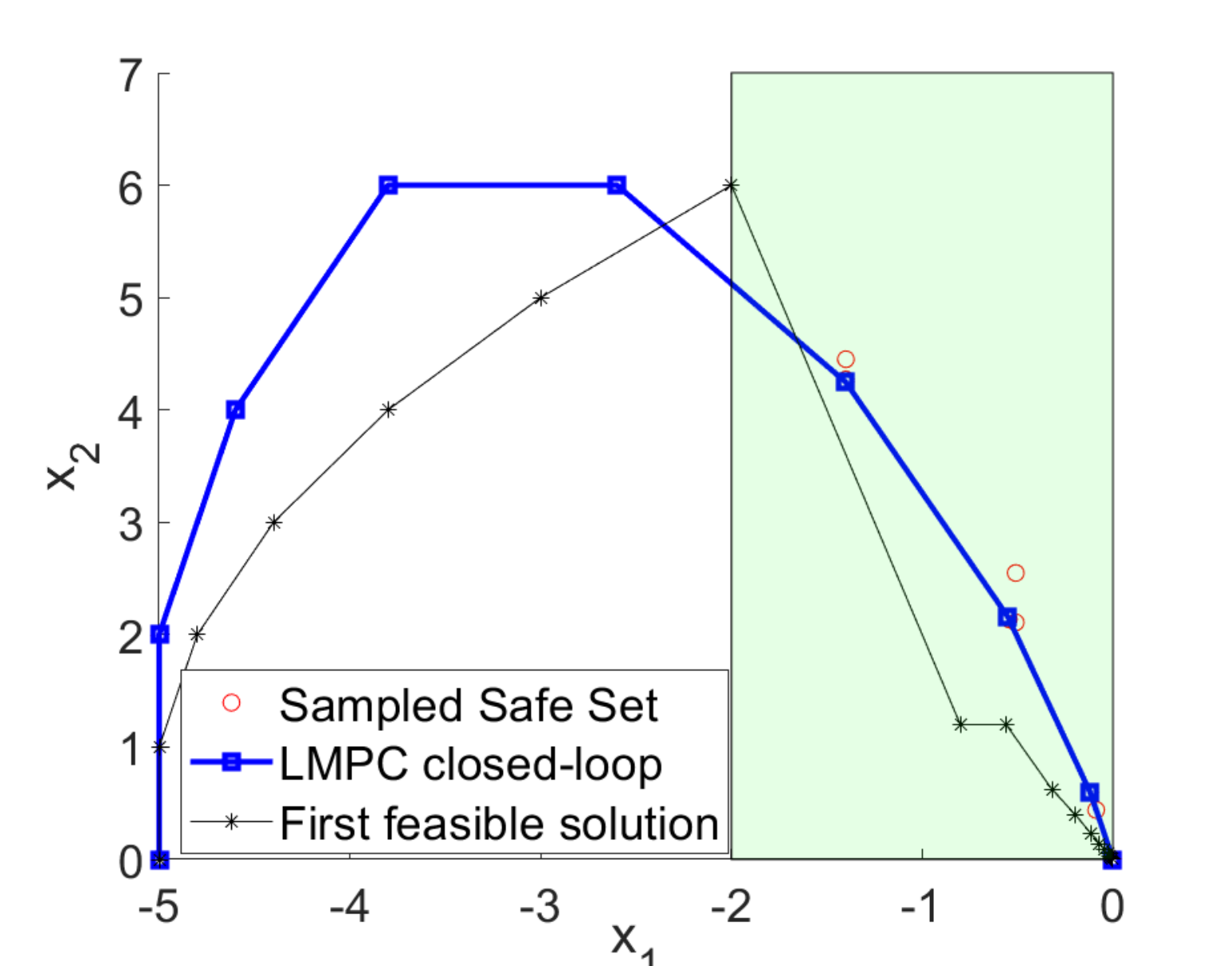}
    \caption{Closed loop realization for system \eqref{eq:PWA_eg} with LMPC. The coloured portion of the state space represents the mode $x_1\geq -2$. The final trajectory is indicated in blue.}
    \label{fig:sim_eg}
\end{figure}
\begin{table}[!h]
	\centering 
	\scriptsize
	\begin{tabular}{cccccc}
		\multicolumn{1}{c}{Iteration} &
		\multicolumn{1}{c}{0} & \multicolumn{1}{c}{1} & \multicolumn{1}{c}{2} & \multicolumn{1}{c}{3} & \multicolumn{1}{c}{4} 
		\\[-0.3cm] \\\hline \\[-0.2cm] 
%		\\[-0.4cm] \\\hline \\[-0.35cm] 
		Cost$\times10^{-2}$   & 6.5831 & 5.0300 & 5.0209 & 5.0159 & 5.0156 \\
		[0.2cm]
		\multicolumn{1}{c}{Iteration} &
		\multicolumn{1}{c}{5} & \multicolumn{1}{c}{6} & \multicolumn{1}{c}{7} & \multicolumn{1}{c}{8} & \multicolumn{1}{c}{9} 
		\\[-0.3cm] \\\hline \\[-0.2cm] 
%		\\[-0.4cm] \\\hline \\[-0.35cm] 
		Cost$\times10^{-2}$ & 5.0156 & 5.0156 & 5.0156 & 5.0156 & 5.0156  \\
		\end{tabular}
	\caption{Iteration Costs for system \eqref{eq:PWA_eg} in closed-loop with LMPC \eqref{eq:LMPC}.}
	\label{tab:Cost_PWA}
\end{table}

% \begin{figure}[h]
%     \centering
%     \includegraphics[width=\linewidth]{}
%     \caption{Cost incurred per iteration}
%     \label{fig:it_Cost}
% \end{figure}
% We observe from figure \ref{fig:it_Cost} that our trajectory costs decrease across iterations and our system reaches the desired (forced) equilibrium as well in figure \ref{fig:sim_eg}.
\subsection{Bilinear System- DC motor control} \label{eg:DC}
Consider the folllowing bilinear model of a DC motor
\small
\begin{align}\label{eq:DC_dyn}
    x_{k+1}=Ax_k+u_{1k}Bx_k+u_{2k}\begin{bmatrix}\frac{dt}{L_a}\\0\\0\end{bmatrix}
\end{align}
\normalsize
where  
\small
$$A=\begin{bmatrix}1-\frac{dt R_a}{L_a}& 0& 0\\0 &1& dt\\0&0&1-\frac{dt D}{J}\end{bmatrix}, B=\begin{bmatrix}0 & 0 & -\frac{K_y dt}{L_a}\\ 0&0&0\\\frac{K_y dt}{J} &0&0\end{bmatrix}$$
\normalsize
The state $x=[I\ \theta\ \omega]^\top$ comprises the armature current, motor angle and angular velocity, with inputs being field current $u_{1}$ and armature voltage $u_{2}$. The sampling period $dt$ is set to $0.01\ \text{s}$ and the other system parameters are taken from \cite{korda2018linear}. The lifted output and associated maps are given by 
\small
\begin{align}
    y_k&=\begin{bmatrix}1 & 0 & 0\\0 & 1 & 0\end{bmatrix}x_k=\begin{bmatrix}I_k\\\theta_k\end{bmatrix} \label{eq:op_eg_DC}\\
    \mathcal{F}_x(y_k,&y_{k+1})=\begin{bmatrix}1&0&0&0\\0&1&0&0\\0&\frac{-1}{dt}&0&\frac{1}{dt}\end{bmatrix}\begin{bmatrix}y_k\\y_{k+1}\end{bmatrix}=\begin{bmatrix}I_k\\\theta_k\\\omega_k\end{bmatrix}\label{eq:flat_x_eg_DC}\\
    \mathcal{F}_u(y_k,&y_{k+1},y_{k+2})=\begin{bmatrix}u_{1k}\\u_{2k}\end{bmatrix}\nonumber\\
    u_{1k} &=\frac{\begin{bmatrix}0 & \frac{1}{dt}\end{bmatrix}(y_{k+2}+(1-\frac{dt D}{J})y_{k}-(2-\frac{dt D}{J})y_{k+1})}{[\frac{K_y dt}{J}\ 0]y_k}\label{eq:flat_u_dc_eg1}\\
    u_{2k} &=\begin{bmatrix}\frac{L_a}{dt}&0\end{bmatrix}(y_{k+1}-(1-\frac{dt R_a}{L_a})y_k)\nonumber\\&+\begin{bmatrix}0& K_yu_{1k}\end{bmatrix}(y_{k+2}-y_{k+1})\label{eq:flat_u_dc_eg2}
\end{align}
\normalsize
The map in \eqref{eq:flat_x_eg_DC} is linear and the map in \eqref{eq:flat_u_dc_eg1} is continuous and linear-fractional for $I_k>0$. The state and input constraints are given by $\mathcal{X}=[0,5]\times\mathbb{R}\times[-10,10]$, $\mathcal{U}=[-5,5]\times\mathbb{R}$. For tracking the set-point $\omega=6.0\ \text{rad/s}$, we minimize the sum of convex stage costs \small$c(x_k,u_k)=20(\omega_k-6)^2+20(I_k-I^*)^2+(u_{1k}-u_1^*)^2$\normalsize where $I^*, u^*_1$ are the equilibrium armature and field current obtained from $\mathcal{SS}^0_{\mathbf{y}}$. Using the augmented system formulation in Section~\ref{input_costs}, we use stage cost \small$\tilde{C}(\tilde{\mathbf{y}}_k)=20([0\ 0\ 1]\mathcal{F}_x(\mathbf{y}_k)-6)^2+20([1\ 0\ 0]\mathcal{F}_x(\mathbf{y}_k)-I^*)^2+([1\ 0]\mathcal{F}_u(\tilde{\mathbf{y}}_k)-u^*_1)^2$\normalsize in \eqref{eq:OP_LMPC} with prediction horizon $N=5$. It is convex because \eqref{eq:flat_x_eg_DC} is linear and \eqref{eq:flat_u_dc_eg1} is linear-fractional (and hence, monotonic \cite{boyd}). The optimization problem \eqref{eq:OP_LMPC} with the terminal set and terminal cost constructed as in \eqref{eq:CSdef} and \eqref{eq:conv_cf} respectively is a NLP, solved using fmincon in MATLAB.

In Figure~\ref{fig:sim_eg_DC}, we see that closed-loop system \eqref{eq:DC_dyn} successfully tracks the desired set-point while meeting state constraints and input constraints (Figure~\ref{fig:input_DC}). The trajectory costs $J^j_{0\rightarrow\infty}=\sum_{k\geq0}\tilde{C}(\tilde{\mathbf{y}}^j_k)$ are non-increasing with iteration $j$ as is evident in Table~ \ref{tab:Cost_DC}.
\begin{figure}[!h]
    \centering
    \includegraphics[width=1\linewidth]{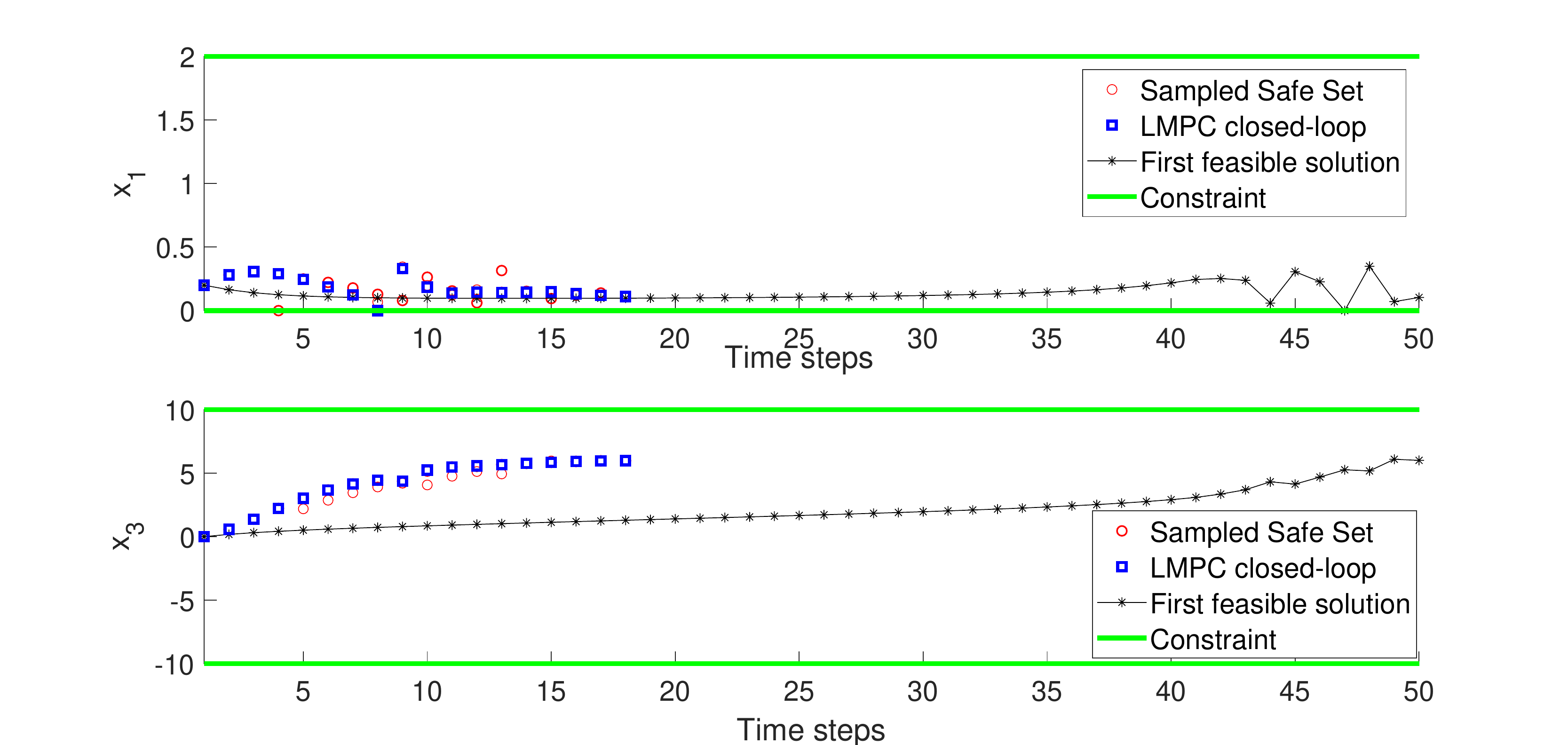}
    \caption{Armature current and angular velocity trajectories in closed-loop with LMPC \eqref{eq:LMPC}. The final trajectory is indicated in blue.}
    \label{fig:sim_eg_DC}
\end{figure}
\begin{figure}[!h]
    \centering
    \includegraphics[width=1\linewidth]{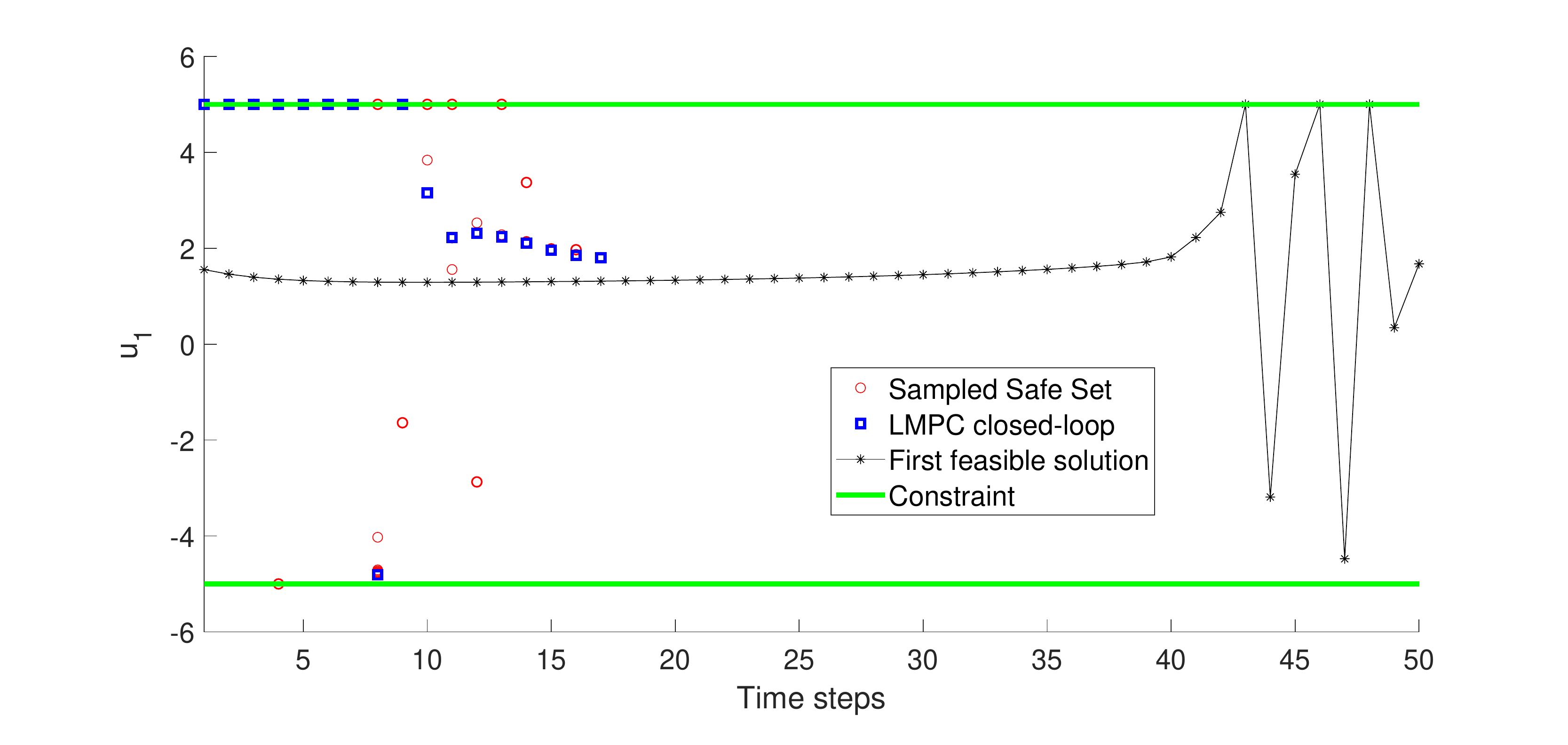}
    \caption{Field current  applied over time across iterations}
    \label{fig:input_DC}
\end{figure}
\begin{table}[!h]
	\centering 
	\scriptsize
	\begin{tabular}{cccccc}
		\multicolumn{1}{c}{Iteration} &
		\multicolumn{1}{c}{0} & \multicolumn{1}{c}{1} & \multicolumn{1}{c}{2} & \multicolumn{1}{c}{3} & \multicolumn{1}{c}{4} 
		\\[-0.3cm] \\\hline \\[-0.2cm] 
%		\\[-0.4cm] \\\hline \\[-0.35cm] 
		Cost$\times10^{-4}$   & 1.7801 & 0.3138 & 0.2653 & 0.2653 & 0.2643 \\
		[0.2cm]
		\multicolumn{1}{c}{Iteration} &
		\multicolumn{1}{c}{5} & \multicolumn{1}{c}{6} & \multicolumn{1}{c}{7} & \multicolumn{1}{c}{8} & \multicolumn{1}{c}{9} 
		\\[-0.3cm] \\\hline \\[-0.2cm] 
%		\\[-0.4cm] \\\hline \\[-0.35cm] 
		Cost$\times10^{-4}$ & 0.2643 & 0.2643 & 0.2643 & 0.2643 & 0.2643  \\
		\end{tabular}
	\caption{Iteration Costs for system \eqref{eq:DC_dyn} in closed-loop with LMPC \eqref{eq:LMPC}.}
	\label{tab:Cost_DC}
\end{table}
% \begin{figure}[!h]
%     \centering
%     \includegraphics[width=1\linewidth]{}
%     \caption{Cost incurred per iteration}
%     \label{fig:it_Cost_DC}
% \end{figure}
\subsection{Kinematic Unicycle}\label{eg:uni}
Consider the following kinematic unicycle model with state $x_k=[X_k\ Y_k\ \theta_k]^\top$, controls $u_k=[v_k\ w_k]^\top$ and discretization step $dt=0.1$ s\small
\begin{align}\label{eq:uni_kin}
    \begin{bmatrix}X_{k+1}\\Y_{k+1}\\\theta_{k+1}\end{bmatrix}=\begin{bmatrix}X_k\\Y_k\\ \theta_k\end{bmatrix} +dt\begin{bmatrix} v_k\cos(\theta_k)\\v_k\sin(\theta_k)\\w_k\end{bmatrix}
\end{align}\normalsize
The lifted output and associated maps are given by 
\small
\begin{align}
    y_k&=\begin{bmatrix}X_k\\Y_k\end{bmatrix},\ \mathbf{Y}_k=[y_k,y_{k+1},y_{k+2}]\label{eq:op_eg}
\end{align}
\begin{align}
    \mathcal{F}_x(y_k,y_{k+1})&=\begin{bmatrix}y_k\\\tan^{-1}\left(\dfrac{\begin{bmatrix}0&1\end{bmatrix}(y_{k+1}-y_{k})}{\begin{bmatrix}1&0\end{bmatrix}(y_{k+1}-y_{k})}\right)\end{bmatrix}\label{eq:flat_x_eg_uni}\\
    \mathcal{F}_u(y_k,y_{k+1},y_{k+2})&=\begin{bmatrix}v_k\\w_k\end{bmatrix}\nonumber
    \end{align}
\begin{align}
    v_k&=\frac{1}{dt}\Vert y_{k+1}-y_{k}\Vert_2\label{eq:flat_u_eg_uni}\\
    w_k&=\begin{bmatrix}0 & 0 & \frac{1}{dt}\end{bmatrix}\big(\mathcal{F}_x(y_{k+1},y_{k+2})-\mathcal{F}_x(y_{k},y_{k+1})\big)
\end{align}
\normalsize
 From \eqref{eq:flat_x_eg_uni}, we see that $\mathcal{F}_x(\cdot)$ is linear in its first two components and monotonic in the third component (composition of monotonic and quasilinear map \cite{boyd}). For speed input $v_k$, \eqref{eq:flat_u_eg_uni} is quasiconvex and doesn't require quasiconcavity (speed is always positive). The state and input constraints are given by $\mathcal{X}=\{(x,y)\in\mathbb{R}^2\vert(x\geq 0)\wedge (y\leq 10) \wedge (x-y\leq 2)\}\times[-\frac{\pi}{2},\frac{\pi}{2}]$, $\mathcal{U}=[0,5]\times\mathbb{R}$. To steer the unicycle to the position $(5,10)$, we minimize the sum of convex stage costs \small$c(x_k,u_k)=20(X_k-5)^2+20(Y_k-10)^2+v_k^2$\normalsize. Using the augmented system formulation in Section~\ref{input_costs}, we use stage cost \small$\tilde{C}(\tilde{\mathbf{y}}_k)=20\Vert y_k-[5\ 10]^\top\Vert_2^2+\Vert [1\ 0]\mathcal{F}_u(\tilde{\mathbf{y}}_k)\Vert_2^2$\normalsize 
 over a prediction horizon $N=5$ in \eqref{eq:OP_LMPC}. This cost is convex in $\tilde{\mathbf{y}}$ because of quasiconvexity of \eqref{eq:flat_u_eg_uni}. The optimization problem \eqref{eq:OP_LMPC} with the terminal set and terminal cost constructed as in \eqref{eq:CSdef} and \eqref{eq:conv_cf} respectively is a NLP, solved using fmincon in MATLAB.

We see that the proposed controller successfully steers the unicycle \eqref{eq:uni_kin} to the position $(5,10)$ (Figure \ref{fig:sim_eg_uni}), while meeting state constraints and input constraints (Figure~\ref{fig:input_uni}). The trajectory costs $J^j_{0\rightarrow\infty}=\sum_{k\geq0}\tilde{C}(\tilde{\mathbf{y}}^j_k)$ are non-increasing with iteration $j$ as is evident in Table~ \ref{tab:Cost_uni}.
\begin{figure}[!h]
    \centering
    \includegraphics[width=1\linewidth]{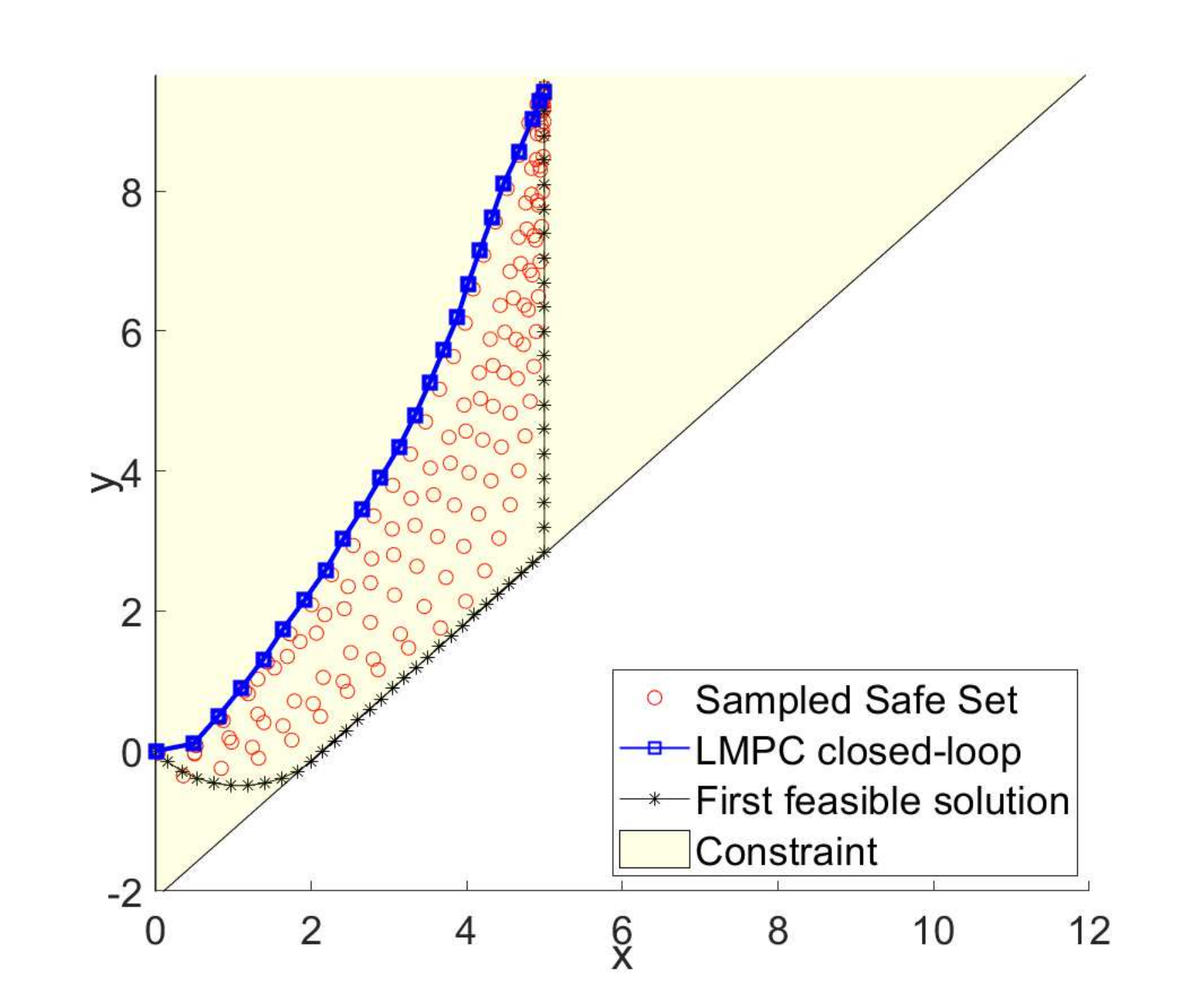}
    \caption{Unicycle \eqref{eq:uni_kin} trajectories  in closed-loop with LMPC. The final trajectory is indicated in blue.}
    \label{fig:sim_eg_uni}
\end{figure}

\begin{figure}[!h]
    \centering
    \includegraphics[width=1\linewidth]{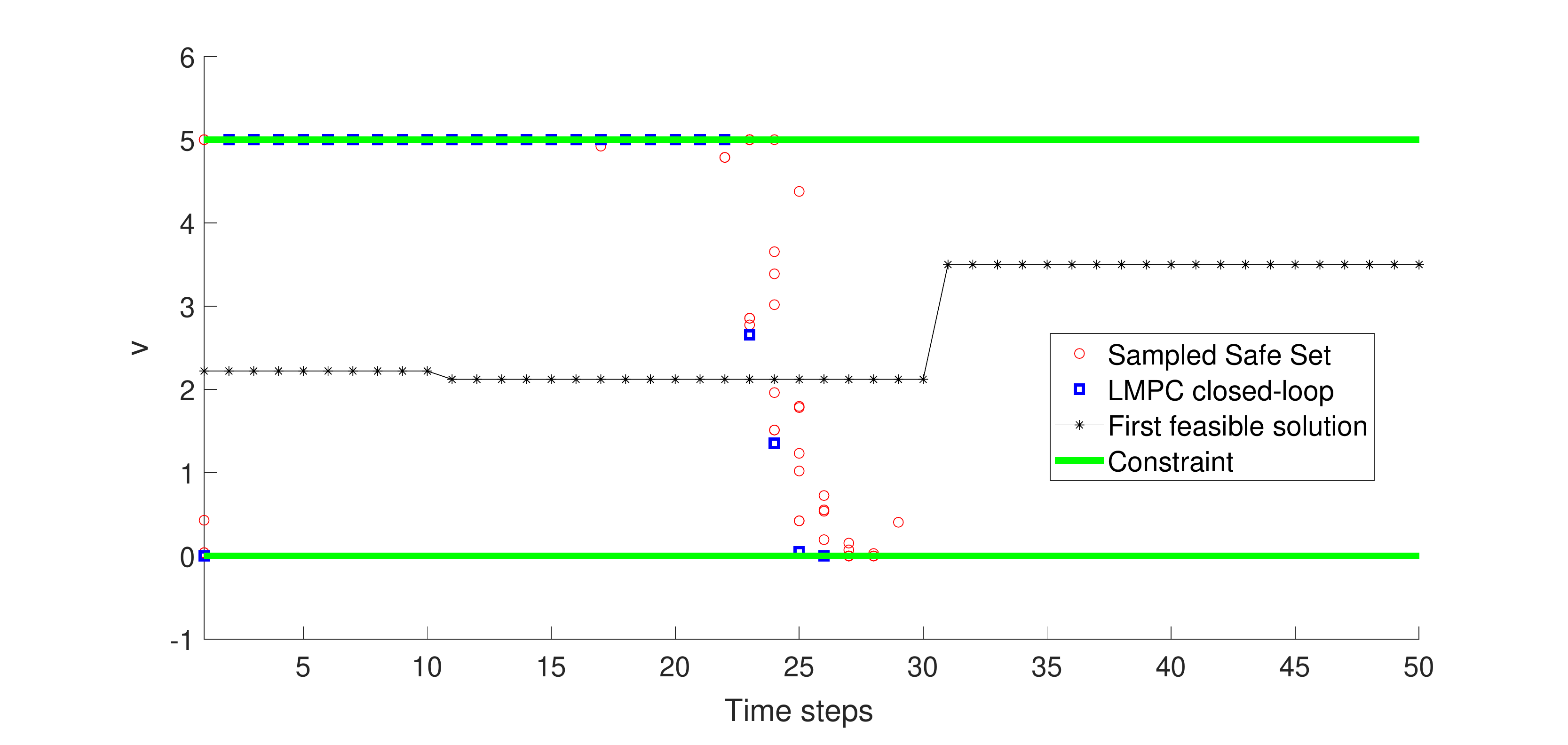}
    \caption{Speed profile over time across iterations. The final trajectory (blue) reaches $(5,10)$ the fastest.}
    \label{fig:input_uni}
\end{figure}

\begin{table}[!h]
	\centering 
	\scriptsize
	\begin{tabular}{cccccc}
		\multicolumn{1}{c}{Iteration} &
		\multicolumn{1}{c}{0} & \multicolumn{1}{c}{1} & \multicolumn{1}{c}{2} & \multicolumn{1}{c}{3} & \multicolumn{1}{c}{4} 
		\\[-0.3cm] \\\hline \\[-0.2cm] 
%		\\[-0.4cm] \\\hline \\[-0.35cm] 
		Cost$\times10^{-4}$   & 6.4931 & 2.8536 & 2.8088 & 2.7301 & 2.6944 \\
		[0.2cm]
		\multicolumn{1}{c}{Iteration} &
		\multicolumn{1}{c}{5} & \multicolumn{1}{c}{6} & \multicolumn{1}{c}{7} & \multicolumn{1}{c}{8} & \multicolumn{1}{c}{9} 
		\\[-0.3cm] \\\hline \\[-0.2cm] 
%		\\[-0.4cm] \\\hline \\[-0.35cm] 
		Cost$\times10^{-4}$ & 2.6624 & 2.5695 & 2.4971 & 2.4515 & 2.4343  \\
		\end{tabular}
	\caption{Iteration Costs for system \eqref{eq:uni_kin} in closed-loop with LMPC \eqref{eq:LMPC}.}
	\label{tab:Cost_uni}
\end{table}

% \subsection{Discussion}\label{eg:discussion}
% \subsubsection{Closed-loop responses} From figures \ref{fig:sim_eg}, \ref{fig:sim_eg_DC} and \ref{fig:sim_eg_uni}, we see that the proposed controller successfully steers the considered systems to their desired equilibrium as claimed by theorem \ref{thm:convergence} while meeting state constraints. In examples \ref{eg:PWA}, \ref{eg:DC} this follows due to the linearity of $\mathcal{F}_x(\cdot)$ in \eqref{eq:flat_x_eg}, \eqref{eq:flat_x_eg_DC}. For the unicycle, the $x-y$ constraints are met owing to linearity of $\mathcal{F}_x(\cdot)$ \eqref{eq:flat_x_eg_uni} in its first two components and the $\theta$ constraint is met due to monotonicity of the map in the third component (composition of monotonic and quasilinear map).\\
% The input constraints are also met in all three examples. For \ref{eg:PWA}, the map $\mathcal{F}_u(\cdot)$ in \eqref{eq:flat_u_eg} is monotonic, whereas it's monotonic in the first argument \eqref{eq:flat_u_dc_eg1} for \ref{eg:DC}. For \ref{eg:uni}, the map is quasiconvex in the first component \eqref{eq:flat_u_eg_uni} and doesn't require quasiconcavity because the positivity constraint is met by default. This validates our claim in theorem \ref{thm:rf}. 
% \subsubsection{Trajectory Costs} From figures \ref{fig:it_Cost}, \ref{fig:it_Cost_DC} and \ref{fig:it_Cost_uni}, we see that the trajectory costs are non-increasing across iterations, thus validating the claim of theorem \ref{thm:cost_imp}.
\section{Conclusion}
We have proposed a revised formulation of LMPC for systems with lifted outputs performing iterative tasks. We showed that with certain properties of these outputs, we can solve infinite-horizon constrained optimal control problems by planning in the space of lifted outputs. A recursively-feasible, stabilizing LMPC strategy was proposed via the construction of a convex control invariant set and an accompanying CLF on this space of lifted outputs using historical data. We leave incorporation of model uncertainty into our formulation as future work.
\section*{Acknowledgement}
We would like to thank Koushil Sreenath for helpful discussions. This work was also sponsored by the Office of Naval
Research. The views and conclusions contained herein are those of the authors and should not be
interpreted as necessarily representing the official policies or endorsements, either expressed or
implied, of the Office of Naval Research or the US government.

\bibliographystyle{ieeetr}
\bibliography{root.bib}
\section{Appendix}
\subsection{Proof of Proposition \ref{prop:box}}\label{proof:box}
Before proving the statement, we first prove the following auxiliary property that is granted by Assumption \ref{ass:flat_prop}\eqref{ass:flatmap_convex}:
\small
\begin{align*}
  \mathcal{F}^i(\mathbf{y})\in[\min\limits_{k=1,\dots, p}\mathcal{F}^i(\mathbf{y}^k),&\max\limits_{k=1,\dots, p}\mathcal{F}^i(\mathbf{y}^k)]\\
  &\forall i=1,\dots,n+m  
\end{align*}\normalsize

We proceed using induction on $p$, the number of points in the set. For $p=2$, the property follows trivially by the definition of monotonicity of $\mathcal{F}^i(\cdot)$ along the line joining $\mathbf{y}^1$ and $\mathbf{y}^2$. Suppose the property is true for $p-1$, i.e.,\small
\begin{align*}
\mathcal{F}^i(\mathbf{y})\in[\min\limits_{k=1,\dots, p-1}\mathcal{F}^i(\mathbf{y}^k),&\max\limits_{k=1,\dots, p-1}\mathcal{F}^i(\mathbf{y}^k)]\\
&\forall i=1,\dots,n+m
\end{align*}\normalsize

Adding an additional point in the set, and writing $\textrm{conv}(\{\mathbf{y}^1,\dots,\mathbf{y}^p\})\ni\mathbf{y}=\lambda\mathbf{y}^p+(1-\lambda)\mathbf{y}'$ for some $\lambda\in[0,1]$ and $\mathbf{y}' \in\textrm{conv}(\{\mathbf{y}^1,\dots,\mathbf{y}^{p-1}\})$. Using the property for $p=2$, we have\small
\begin{align*}
\mathcal{F}^i(\mathbf{y})\in[\min(\mathcal{F}^i(\mathbf{y}^p),\mathcal{F}^i(\mathbf{y}')),&\max(\mathcal{F}^i(\mathbf{y}^p),\mathcal{F}^i(\mathbf{y}'))]\\
& \forall i=1,\dots,n+m
\end{align*}\normalsize
Using the truth of property for $p-1$ we have for all $i=1,\dots,n+m$
\small
\begin{align*}
\min\limits_{k=1,\dots, p}\mathcal{F}^i(\mathbf{y}^k)\leq\min(\mathcal{F}^i(\mathbf{y}^p),\mathcal{F}^i(\mathbf{y}'))\\ 
\max(\mathcal{F}^i(\mathbf{y}^p),\mathcal{F}^i(\mathbf{y}')) \leq \max\limits_{k=1,\dots, p}\mathcal{F}^i(\mathbf{y}^k)\\
\Rightarrow \mathcal{F}^i(\mathbf{y})\in[\min\limits_{k=1,\dots, p}\mathcal{F}^i(\mathbf{y}^k),\max\limits_{k=1,\dots, p}\mathcal{F}^i(\mathbf{y}^k)]
\end{align*}\normalsize
The property thus holds true for $p$ as well and induction helps us conclude that this holds for any $p\geq 1$. Since each $\mathcal{F}(\mathbf{y}^k)\in\mathcal{X}\times\mathcal{U}$ and the sets $\mathcal{X}\times\mathcal{U}$ are defined by box constraints, this implies that
$$\mathcal{F}(\mathbf{y})\in\mathcal{X}\times\mathcal{U}\quad\forall\mathbf{y}\in\textrm{conv}(\{\mathbf{y}^1,\dots,\mathbf{y}^p\})$$
\hfill$\blacksquare$

% \begin{thebibliography}{99}
% \bibitem{UgoTAC} Rosolia, Ugo, and Francesco Borrelli. "Learning model predictive control for iterative tasks. a data-driven control framework." IEEE Transactions on Automatic Control 63.7 (2017): 1883-1896.
% \bibitem{mtlmpc} Rosolia, Ugo, and Francesco Borrelli. "Minimum Time Learning Model Predictive Control.", arXiv:1911.09239
% \bibitem{flat} Guillot, Philippe and Gilles Millérioux. “Flatness and Submersivity of Discrete-Time Dynamical Systems.” IEEE Control Systems Letters 4 (2019): 337-342.
% \bibitem{mpc} Borrelli, Francesco, Alberto Bemporad, and Manfred Morari. "Predictive control for linear and hybrid systems." Cambridge University Press, 2017.
% \bibitem{mip} Zhu, Jia-Jie and Martius, Georg. "Fast Non-Parametric Learning to Accelerate Mixed-Integer Programming for Online Hybrid Model Predictive Control.", arXiv:1911.09214
% \bibitem{flat_hybrid} Sreenath, Koushil, Taeyoung Lee, and Vijay Kumar. "Geometric control and differential flatness of a quadrotor UAV with a cable-suspended load." $52^{nd}$ IEEE Conference on Decision and Control, pp. 2269-2274. IEEE, 2013.

% \end{thebibliography}

\end{document}